\documentclass[a4paper,reqno,12pt]{amsart}

\usepackage[utf8]{inputenc}
\usepackage{amsmath, amssymb, amsthm, epsfig}
\usepackage{hyperref, latexsym}
\usepackage{url}
\usepackage[mathscr]{euscript}
\usepackage{color}
\usepackage{harpoon}
\usepackage{mathabx}
\usepackage{tikz}
\usepackage{mathtools,fbox}
\usepackage{refcheck}
\DeclareMathAlphabet{\pazocal}{OMS}{zplm}{m}{n}

\usepackage{pgfplotstable}
\usepackage{pgfplots}

\usepackage{fullpage} 
\usepackage[nodisplayskipstretch]{setspace}
\mathtoolsset{showonlyrefs}
\norefnames
\nocitenames

\def\today{\ifcase\month\or
  January\or February\or March\or April\or May\or June\or
  July\or August\or September\or October\or November\or December\fi
  \space\number\day, \number\year}

\DeclareMathOperator{\supp}{\mathrm{supp}}

\newtheorem{theorem}{Theorem}

\newtheorem{lemma}[theorem]{Lemma}
\newtheorem{proposition}[theorem]{Proposition}
\newtheorem{corollary}[theorem]{Corollary}

\renewcommand{\P}{\mathcal{P}}

\newcommand{\z}{\mathbb{Z}}

\renewcommand{\r}{\mathbb{R}}
\newcommand{\cp}{\mathbb{C}} 

\newcommand{\ft}{\widehat}

\newcommand{\bo}{\boldsymbol}

\newcommand{\ba}{{\bo a}}

\newcommand{\la}{\lambda}
\newcommand{\ga}{\gamma}
\newcommand{\al}{\alpha}
\newcommand{\be}{\beta}
\newcommand{\ep}{\varepsilon}

\newcommand{\p}{\varphi}

\newcommand{\bd}{{\tfrac12}B_d}

\renewcommand{\d}{\mathrm{d}}
\newcommand{\ov}[1]{\overline{#1}}

\newcommand{\cd}{\cdot}
\newcommand{\1}{{\bf 1}}
\newcommand{\ra}{\rightarrow}

\newcommand{\del}{{\bo \delta}}

\renewcommand{\Re}{{\rm Re}\,}

\renewcommand{\bd}{{\bo d}}

\begin{document}
\raggedbottom


\title[]{The Riemann zeta function at the zeros of real-rooted trigonometric polynomials}
\author{Felipe Gonçalves, Bryce Kerr, Oleksiy Klurman and Guilherme Vedana}
\date{\today}
\subjclass[2020]{Primary 11M06; Secondary 11M26, 42A05, 42A38,
34B45.}
\keywords{Riemann zeta function, large values, real-rooted trigonometric polynomials, Fourier quasicrystals,
quantum graphs.}

\address{IMPA - Instituto de Matemática Pura e Aplicada, Rio de Janeiro, 22460-320, Brazil.}
\email{goncalves@impa.br}

\address{School of Science, University of New South Wales, Canberra, Australia}
\email{bryce.kerr@unsw.edu.au}

\address{School of Mathematics, University of Bristol, United Kingdom}
\email{lklurman@gmail.com}

\address{IMPA - Instituto de Matemática Pura e Aplicada, Rio de Janeiro, 22460-320, Brazil.}
\email{guilhermeisraelvedana@gmail.com}
\allowdisplaybreaks


\begin{abstract}
Let $P$ be a trigonometric polynomial with only real zeros. We prove that at most $\frac{2}{3}$ of
its zeros coincide with the zeros of the Riemann zeta function $\zeta(s)$ on the
critical line. We also show that
\begin{equation*}
    \sup_{\substack{T\leq x\leq2T\\P(x)=0}}
 \log \left|\zeta\!\left(\tfrac12+ix\right)\right|
 \geq
 (1+o(1))\sqrt{\frac{\log T}{6\log\log T}}.
\end{equation*}
These results extend the corresponding theorems of Li and Radziwi\l\l\ on the zeros of $\zeta(s)$ along vertical arithmetic progressions. The main novelty in our approach is the use of the work of Kurasov and Sarnak and Lee--Yang representations of Alon--Cohen-Vinzant, which show that the corresponding measure $\sum_{P(\gamma)=0} {\bf \delta}_\gamma$ is crystalline. 
\end{abstract}


\maketitle


\section{Introduction}

The behavior of the Riemann zeta function on discrete subsets of the critical line has been extensively studied from several points of view. Strengthening earlier nonvanishing results of Martin and Ng \cite{MN}, Li and Radziwi\l\l\ \cite{LR} proved that, for fixed $a>0$ and $b\in\r$,
\begin{equation}\label{RLresult}
 \frac{
  \#\left\{n\in\z\cap[N,2N]:
  \zeta\!\left(\frac12+i(an+b)\right)=0\right\}
 }{N}
 \leq \frac23+o(1)
 \qquad (N\to\infty),
\end{equation}
and this result holds unconditionally. Discrete first moments and related value-distribution questions had also been considered by Steuding and Wegert \cite{SW}. In a different direction, van Frankenhuijsen \cite{vF} obtained an explicit bound for the length of a finite arithmetic progression consisting entirely of zeta-zero ordinates.

Li and Radziwi\l\l\ also established the large-value estimate
\begin{equation}\label{eq:LR-large-values}
 \max_{N \leq n \leq 2N}
 \left|\zeta\!\left(\frac12+i(an+b)\right)\right|
 \geq
 \exp\!\left((1+o(1))
 \sqrt{\frac{\log N}{6\log\log N}}\right).
\end{equation}
Their argument is a discrete version of Soundararajan's resonance method \cite{Sound}. A triple log improvement of this bound via the resonance method and gcd-sums has subsequently been developed in many directions (see \cite{BDHS,BS,BS2,MS} for some of the results).
\subsection{From arithmetic progressions to trigonometric polynomials}

Our starting point is that an arithmetic progression is itself the zero set of a trigonometric
polynomial, since $\sin\!\left(\frac{\pi(x-b)}a\right)$ vanishes precisely at the points $b+an$, $n\in\z$.  This simple observation leads to the following  question:

\begin{quote}
\emph{Given a trigonometric polynomial $P$, how often can a real zero
of $P$ be the ordinate of a zero of the Riemann zeta function on the
critical line?}
\end{quote}

One naturally expects only finitely many such coincidences, since the zeros of trigonometric polynomials have completely different densities and statistics from those of the Riemann zeta function and are predictable. Nevertheless, even this statistical question seems unexplored in the literature.

More specifically, by a trigonometric polynomial we mean a function
\[
 P(z)=\sum_{j=1}^{J}a_j e^{2\pi i\lambda_jz} \qquad (a_j\in\cp,\quad \lambda_j\in\r).
\]
We say that \(P\) is \emph{real-rooted} when every zero of \(P\) is real. Throughout the paper, \(P\) is fixed, the multiset of its zeros is denoted by $\{\ga\}$, with each zero repeated according to its multiplicity. We briefly mention a few well-known and easy to derive facts: the number of such zeros in $[T,2T]$ is $cT+O(1).$ It is also well-known that the zeros split into two parts: a finite union of periodic sets and an aperiodic set, where the aperiodic part has finite intersection with any arithmetic progression. For more information about the gap distribution of zeros of real-rooted trigonometric polynomials see \cite{AV}. Our results provide non-trivial information when the trigonometric polynomial in question is \emph{not} a product of periodic trigonometric polynomials, that is, when the aperiodic part is present, since otherwise the statement would follow from the previous workof Li and Radziwi\l\l~\cite{LR} on arithmetic progressions. 

Our first main result is the following.

\begin{theorem}\label{thm:main}
Let \(P\) be a real-rooted trigonometric polynomial having at least one
zero. Then
\begin{equation*}
    \limsup_{T\to\infty} \frac{\#\{\ga \in [T,2T] : \zeta(\frac12 + i\ga)=P(\ga)=0\}}{\#\{\ga \in [T,2T] : P(\ga)=0\}} \leq \frac{2}{3},
\end{equation*}
where both sets are regarded as multisets.
\end{theorem}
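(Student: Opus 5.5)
We follow the Li--Radziwi\l\l\ strategy, which compares a first and a second mollified discrete moment. The key input that replaces the arithmetic progression is the crystalline structure of the zero measure. By the results quoted in the abstract (Kurasov--Sarnak, and the Lee--Yang representation of Alon--Cohen-Vinzant), the measure $\mu=\sum_{\ga}\del_\ga$ is a Fourier quasicrystal. Concretely,
\[
\ft\mu = c\,\del_0+\sum_{\xi\in\Lambda\setminus\{0\}} b_\xi\,\del_\xi ,
\]
where $c>0$ is the zero density and $\Lambda$ is a uniformly discrete set, contained in a finitely generated group $\{\langle \bo n,\bd\rangle: \bo n\in\z^k\}$ determined by the frequencies $\lambda_j$. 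The coefficients satisfy $\sum_{|\xi|\le X}|b_\xi|\ll X^{A}$, or at least enough control to sum against rapidly decaying test functions. This gives a Poisson-type formula: $\sum_\ga f(\ga)=c\int f+\sum_{\xi\ne 0}b_\xi\ft f(\xi)$ for smooth $f$ with compactly supported or fast-decaying Fourier transform.

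Write $m$ for the multiplicity of $\ga$ as a zero of $\zeta(\frac12+i\,\cdot)$. We fix a smooth bump $w$ adapted to $[T,2T]$ and a mollifier $M(s)=\sum_{n\le T^\theta}a_n n^{-s}$. We then compute two sums:
\[
S_1=\sum_\ga w(\ga)\,\zeta(\tfrac12+i\ga)M(\tfrac12+i\ga), \qquad S_2=\sum_\ga w(\ga)\,|\zeta M(\tfrac12+i\ga)|^2 .
\]
By Cauchy--Schwarz,
\[
\#\{\ga:\zeta(\tfrac12+i\ga)\ne 0\}\ \ge\ |S_1|^2/S_2 .
\]
Following Li--Radziwi\l\l, in place of $\zeta$ we may use a suitable combination of $\zeta$ and its derivatives, or a shift $\zeta(\frac12+\alpha+i\ga)$, to reach the constant $1/3$ of nonvanishing. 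Multiplicities are handled by noting that a zero of $\zeta$ that $P$ also vanishes at to order $r$ is counted $r$ times on both sides, so it suffices to weight the sums by $\mu$ itself.

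To evaluate $S_1$ and $S_2$ we expand $\zeta$ with the approximate functional equation into Dirichlet polynomials. Each sum then becomes $\sum_\ga w(\ga)\,e^{-i\ga\log(m/n)}$ and similar terms. Applying the Poisson formula above, the term $c\int w(x)(m/n)^{-ix}\,dx$ reproduces exactly the continuous mean values $\int w\,\zeta M$ and $\int w|\zeta M|^2$, scaled by $c$. These continuous moments are classical; we take them from Li--Radziwi\l\l\ or from Conrey's mollifier computations with the same choice of mollifier. The remaining terms are $\sum_{\xi\neq0}b_\xi\,\ft w\bigl(\xi+\tfrac{\log(m/n)}{2\pi}\bigr)$. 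Since $\ft w$ is concentrated on a window of width $\asymp 1/T$, a pair $(m,n)$ contributes only if $\tfrac{\log(n/m)}{2\pi}$ lies within $O(T^{-1+\ep})$ of some nonzero $\xi\in\Lambda$.

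The main obstacle is bounding these off-diagonal resonances. In the arithmetic progression case $\Lambda=a^{-1}\z$, and the resonances are $n/m$ close to $e^{2\pi k/a}$; these are handled by counting lattice points near a curve. Here $\Lambda$ has rank possibly exceeding one, and we must show that
\[
\sum_{\xi\ne0}|b_\xi|\;\#\Bigl\{m,n\le T^{\vartheta}:\ \bigl|\log(n/m)-2\pi\xi\bigr|\le T^{-1+\ep}\Bigr\}\,|\text{coefficients}|
\]
is $o(T)$. Because $m,n\le T^{\vartheta}$, only $|\xi|\ll\log T$ is relevant. For each such $\xi$, the number of $n$ with $n/m\in e^{2\pi\xi}(1+O(T^{-1+\ep}))$ is at most $1+T^{\vartheta-1+\ep}$. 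So the total is bounded by $\#\{\xi\in\Lambda:|\xi|\ll\log T\}\cdot(\log T)^{A}\cdot T^{\vartheta}$ times coefficient sizes. Since $\Lambda$ is uniformly discrete, this count is polylogarithmic, which gives $o(T)$ provided the mollifier length satisfies $\vartheta<1$; for $S_2$ this means $\vartheta<\frac12$ after squaring. The one delicate point, which I would check first, is that the polynomial growth bound on $\sum_{|\xi|\le X}|b_\xi|$ indeed follows from the Lee--Yang/Kurasov--Sarnak representation; it should follow from $\mu$ being a translation-bounded tempered measure with $\ft\mu$ a measure.

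With the off-diagonal terms shown negligible, the main terms reproduce the Li--Radziwi\l\l\ ratio. This gives at least $(\frac13-o(1))cT$ zeros $\ga\in[T,2T]$ of $P$, counted with multiplicity, at which $\zeta(\frac12+i\ga)\ne0$. Finally, dividing by the zero count $cT+O(1)$ yields the $\frac23$ bound of Theorem~\ref{thm:main}.
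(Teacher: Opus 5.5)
Your architecture matches the paper's: the Kurasov--Sarnak formula replaces Poisson, mollified first and second discrete moments are compared with their continuous analogues, then Cauchy--Schwarz and $\theta\uparrow\frac12$. The gap is the step you call the main obstacle. It cannot be done by counting, and your count has dropped the amplitude of the test function. $\ft w(x)=T\ft\p(Tx)$ has size $\asymp T$ on the resonant window, so each resonant pair carries weight $\asymp T/\sqrt{mn}$ (times mollifier coefficients), not $O(1)$. With the correct weights, resonances from a reduced fraction $r/s$ of small height with $|r/s-e^{2\pi\xi}|\le e^{2\pi\xi}T^{-1+\ep}$ are fatal. Every pair $(m,n)=(ds,dr)$ then resonates, and the contribution is about $T\,|b_\xi|(rs)^{-1/2}$ times an arithmetic sum that is trivially only $(\log T)^{O(1)}$.

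This really happens. For $P(x)=\sin(\tfrac12 x\log 2)$ the frequency $\xi=\log 2/(2\pi)$ has $e^{2\pi\xi}=2$ exactly. The unmollified discrete moments then differ from the continuous ones by the same order as the main term. In the first moment, the resonant term is $T|c_{\ba}|s^{-1/2}A_{\theta,T}(s)$ with $s\approx e^{2\pi\bo\ell\cd\ba}$ an integer. With $|A_{\theta,T}(s)|\le\tau(s)$ this sums to $O(T)$, not $o(T)$, which already destroys $I\sim c_0T\ft\p(0)$. Even for arithmetic progressions, Li--Radziwi\l\l\ do not handle these resonances by lattice-point counting.

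The missing idea is that these terms are small only because of cancellation in the mollifier coefficients. For the first moment with $1<s\le T$, the paper writes $A_{\theta,T}(s)=\sum_{m\mid s}b(m)$. Using $b(m)=\mu(m)+O(\log m/\log T)$ and $\sum_{m\mid s}\mu(m)=0$ gives $A_{\theta,T}(s)\ll s^\ep/\log T$; for $s>T$, the factor $s^{-1/2}<T^{-1/2}$ plus counting suffices.

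For the second moment, the paper splits by the reduced fraction $r/s$. Fractions with $s\ge K_{\ba}$ are counted by Farey spacing; this is where $\theta<\frac12$ enters, through bounds $T^{1/2+\theta+O(\ep)}$. For $s<K_{\ba}$ there is at most one fraction $(r_{\ba},s_{\ba})$ per $\ba$. Its contribution $T|c_{\ba}|(r_{\ba}s_{\ba})^{-1/2}|F(r_{\ba},s_{\ba},t)|$ is controlled only by Li--Radziwi\l\l's Lemma~1 (Proposition~\ref{prop:bounding_F}): $F(r,s,t)\ll(rs)^\ep/\log T$ for coprime $rs>1$, a Barban--Vehov/Selberg-sieve computation. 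Without this input your argument does not give $o(T)$.

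Three smaller points:
\begin{itemize}
\item The spectrum $\{\pm\bo\ell\cd\ba\}$ is not uniformly discrete when $M\ge2$; it has $\asymp X^M$ points up to $X$. What you need is only $\bo\ell\cd\ba\ge\min_j\ell_j\,|\ba|_1$ together with $c_{\ba}=O(|\ba|)$.
\item The constant $\frac13$ comes from $\theta/(1+\theta)$, via the Balasubramanian--Conrey--Heath-Brown bound with $\theta\uparrow\frac12$.
\item Replacing $\zeta$ by a combination with its derivatives, or by a shift off the line, would not detect nonvanishing of $\zeta(\frac12+i\ga)$ itself.
\end{itemize}
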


Our second result extends the large-value estimate \eqref{eq:LR-large-values}.

\begin{theorem}\label{thm:main2}
Let \(P\) be a real-rooted trigonometric polynomial having infinitely many zeros. Then
\[
\sup_{\substack{T\leq\gamma\leq2T\\P(\gamma)=0}}
 \left|\zeta\!\left(\tfrac12+i\gamma\right)\right|
 \geq
 \exp\!\left((1+o(1))
 \sqrt{\frac{\log T}{6\log\log T}}\right).
\]
\end{theorem}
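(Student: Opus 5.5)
Our plan is to run Soundararajan's resonance method in the discrete form used by Li and Radziwi\l\l, with the sum over the arithmetic progression replaced by a sum over the zeros $\{\ga\}$ of $P$. The key input, mentioned in the abstract, is that $\mu=\sum_{\ga}\del_\ga$ is a crystalline (Fourier quasicrystal) measure, by Kurasov--Sarnak and the Lee--Yang representation of Alon--Cohen-Vinzant. Concretely, $\ft\mu = c\,\del_0 + \sum_{\la\in\Lambda} b_\la \del_\la$, where $\Lambda$ is a discrete uniformly discrete-in-growth set, $b_\la$ has polynomial growth, and $c>0$ is the density of the zeros. By this Poisson-type formula, for a smooth weight $\Phi$ with compactly supported Fourier transform we get
\[
\sum_\ga \Phi\!\left(\tfrac{\ga}{T}\right) e^{-i\ga u} = cT\,\ft\Phi(0\text{-shift}) + \sum_{\la\neq0} b_\la\, T\ft\Phi\big(T(\la+\tfrac{u}{2\pi})\big),
\]
up to normalization. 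If $\ft\Phi$ is supported in $[-1,1]$, the nonzero frequencies $\la$ contribute only when $u$ lies within $O(1/T)$ of $-2\pi\la$.

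The steps are as follows. Take $\Phi\ge0$ supported essentially on $[1,2]$ with $\ft\Phi$ compactly supported (or of rapid decay), and Soundararajan's resonator $R(x)=\sum_{m\le M} r(m)m^{-ix}$ with $M=T^{\theta}$, $\theta<1$, where $r$ is the multiplicative resonator supported on primes in $[L^2,\exp((\log L)^2)]$, $L=\sqrt{\log T\log\log T}$. Approximate $\zeta(\tfrac12+ix)$ on $[T,2T]$ by a Dirichlet polynomial of length about $T$ (approximate functional equation, with the dual sum handled as usual), and compare
\[
S_1=\sum_\ga \Phi(\ga/T)|R(\ga)|^2,\qquad S_2=\sum_\ga\Phi(\ga/T)\zeta(\tfrac12+i\ga)|R(\ga)|^2 .
\]
Expanding gives exponentials $e^{-i\ga\log(mn/k)}$ with $u=\log(mk/n)$, or $\log(m/n)$ for $S_1$. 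The term $\la=0$ reproduces exactly the continuous resonance main terms: $cT\,\ft\Phi(0)\sum r(m)^2$ for $S_1$, and $cT\sum_{mk=n}r(m)r(n)k^{-1/2}$ for $S_2$. Soundararajan's optimization then gives $S_2/S_1\ge\exp((1+o(1))\sqrt{\log T/(6\log\log T)})$ once the off-diagonal terms are controlled, and this ratio bounds $\sup|\zeta(\tfrac12+i\ga)|$ over $\ga\in[T,2T]$. The constant $6$ rather than $2$ (which is what the continuous method gives) comes from the restriction $\theta<1$ forced by the discrete setting, as in \cite{LR}.

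The main obstacle is bounding the contribution of the frequencies $\la\neq0$, that is, the coincidences $\log(mk/n)\approx -2\pi\la$ to within $1/T$. Since $\Lambda$ is a fixed discrete set with at most polynomially many points in a window and polynomially bounded $b_\la$, only $\la$ with $|\la|\ll\log T$ matter. For these we need that $\log(mk/n)$ lies within $1/T$ of a given $\la$ for few triples. We would try to handle this with positivity. Since $r\ge0$, we may choose $\Phi$ with $\ft\Phi\ge0$, and then bound the nonzero-$\la$ terms using $|b_\la|$ and a count of how many $(m,n,k)$ with $m,n\le M$, $k\le T$ satisfy $|\log(mk/n)+2\pi\la|<1/T$. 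The count uses spacing: the values $mk/n$ in a fixed range are $1/(nT)$-separated modulo rational gaps, so each window contains $O(M^{2}/T\cdot\text{polylog})$ points. This gives a saving when $\theta<1/2$-type constraints are respected, exactly as in \cite{LR}. If needed, we would instead exploit that $\la$ ranges over a finitely generated set (the spectrum of $\mu$ is contained in the $\z$-span of the $\la_j$), so that the exponential sums can be bounded crudely with a loss of only $T^{o(1)}$. That loss is negligible against the main term $T\sum r(m)^2$ provided $M\le T^{1/2-\ep}$ or so. Finally we must check that the contribution of the multiplicities of $\ga$, and of the part of the support of $\Phi$ outside $[1,2]$, are harmless; this follows from the nonnegativity of all summands in $S_1$ and from rapid decay.
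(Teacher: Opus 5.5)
Your skeleton is the same as the paper's: Kurasov--Sarnak in place of Poisson, the zero frequency reproducing the continuous resonance main terms, then Soundararajan's ratio. The gap is the step you yourself flag as the main obstacle. It is not resolved, and the counting you sketch for it is false.

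\textbf{Where the counting fails.} A window $|\log(mk/n)-2\pi\lambda|<1/T$ does \emph{not} in general contain $O(M^2T^{-1}\cdot\mathrm{polylog})$ triples. Distinct values of $mk/n$ are well spaced, but a single reduced fraction of small height is represented by a huge number of triples. If $e^{2\pi\lambda}$ equals, or lies within $T^{-1+\eta}$ of, such a fraction, all of those triples sit in the window. Their weighted contribution is then comparable to the diagonal itself.

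Concretely, take the zeros $2\pi j/\log 2$ of the real-rooted $P(x)=\sin(\tfrac{x\log 2}{2})$. Its spectrum is $\tfrac{\log 2}{2\pi}\z$. With your $r$ supported on odd integers, all triples with $k=2k'$, $mk'=n$ have $mk/n=2$ exactly on a spectral frequency. They contribute
\[
\tfrac{\log 2}{2\pi}\,T\ft\Phi(0)\,2^{-1/2}\sum_{mk'=n}r(m)r(n)k'^{-1/2},
\]
which is a constant multiple of the main term. There are about $M\log M$ such pairs $(m,n)$, far more than $M^2/T$. If you bound them with $r\le1$ you get roughly $TM$, hopelessly larger than $T\sum r(m)^2=T^{1+o(1)}$.

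Since $\bo\ell$ is arbitrary, you also cannot exclude near-coincidences with $T$-dependent fractions built from resonator primes. There can be up to $(\log T)^M$ relevant frequencies. The weights $b_\lambda\ft\Phi(\cdot)$ carry uncontrolled phases, so positivity of $r$ and $\ft\Phi$ does not help. No bound of the shape $M^2T^{o(1)}$ for the nonzero frequencies can hold in general.

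\textbf{The missing idea: arithmetic surgery on the resonator.} The paper sets $H=T^{1/2-\eta}$ and $R=(\log\mathcal L)^3$.
\begin{itemize}
\item For each of the $O(R^M)$ low frequencies $0<|\xi|\le R$, there is at most one reduced pair $a\le X$, $b\le H$ with $|\xi-\tfrac1{2\pi}\log\tfrac ab|\le T^{-1+\eta}$.
\item One deletes a prime factor of $a$ and of $b$ from the support of $r$. This costs only $o(1)$ in $\log(S_X/E_X)$.
\item When $a=1$, one also replaces the zeta polynomial by $A_Q(s)=\sum_{h\le TQ,\,(h,Q)=1}h^{-s}=\zeta(s)P_Q(s)+\dots$. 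The price is only $\sup|P_Q|=\exp(O(R^M))$. This is exactly the situation of the example above.
\item After this surgery the low frequencies contribute zero in the range $nh\le H$.
\item High frequencies $|\xi|>R$ are controlled multiplicatively: $U(a,b)\le r(a)K(b)S_X\le(ab)^{-\kappa}S_X$, with $ab\gg e^{2\pi|\xi|}$ and $\kappa R\to\infty$, where $\kappa\asymp\log\log\mathcal L/\log\mathcal L$.
\item The range $nh>H$ is handled by a crude count that needs $h>H/X$ to be a large power of $T$. This is what forces $X=T^{1/6-20\eta}$.
\end{itemize}

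\textbf{Your parameter discussion is inconsistent.} With resonator length $T^{\theta}$ the method gives $\exp((1+o(1))\sqrt{\theta\log T/\log\log T})$. So the continuous method gives the constant $1$, not $2$, and the $6$ corresponds to $\theta=1/6$, not to "$\theta<1$". Your assertions that $\theta<1$ or $M\le T^{1/2-\ep}$ suffice would prove a stronger statement than the theorem, with no justification.
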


Real-rootedness is crucial for several reasons. On the technical side, one cannot count complex zeros of trigonometric polynomials using nonnegative smoothed sums without working with the modulus of the test function. This makes the application of a suitable summation formula much more complicated. Moreover, the test function in such a formula could not be compactly supported, because it would have to be evaluated at complex points, and hence we could not localize it. Nevertheless, we believe that the conclusions of our results hold for \emph{any} reasonable trigonometric polynomial, whether real-rooted or not.

The novelty of our results lies in the technique, in which real-rootedness plays a crucial role. Kurasov and Sarnak \cite{KS} showed that the counting measure of the zeros of a real-rooted trigonometric polynomial has discrete spectrum and its distributional Fourier transform is a sum of Dirac delta measures whose locations and weights can be computed explicitly.

To the best of our knowledge, this is the first use of such crystalline measures to prove results in analytic number theory and we hope this could inspire further related investigations.

We remark in passing that in view of the interest in the spectral interpretation of the zeros of $\zeta(s),$ one can immediately deduce the following corollary (see \cite{BK} for the background on quantum graphs).

\begin{corollary}\label{cor:quantum-graphs}
Let \(G\) be a finite compact metric graph equipped with the standard Kirchhoff--Neumann Laplacian. Then
\[
 \limsup_{T\to\infty}
 \frac{
  \#\left\{x\in\mathcal K_G\cap[T,2T]:
       \zeta\!\left(\frac12+ix\right)=0\right\}
 }{
  \#\bigl(\mathcal K_G\cap[T,2T]\bigr)
 }
 \leq\frac23,
\]
where \(\mathcal K_G\) is a multiset and all spectral values are counted with multiplicity.
\end{corollary}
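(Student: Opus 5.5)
The corollary will follow from Theorem~\ref{thm:main} once we show that $\mathcal K_G$, counted with multiplicity, is the zero multiset of a real-rooted trigonometric polynomial with at least one zero, up to a finite set of exceptional points. That finite set cannot change the limsup, because $\#(\mathcal K_G\cap[T,2T])\asymp T$.

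\textbf{Step 1: the secular equation.} Let $G$ have edges of lengths $\ell_1,\dots,\ell_E$, and write the eigenvalues of the Kirchhoff--Neumann Laplacian as $k^2$ with $k\ge 0$. Take $\mathcal K_G$ to be the multiset of the values $k$, which is the normalization the corollary uses. By the standard scattering (Kottos--Smilansky) description, $k>0$ is in $\mathcal K_G$ with multiplicity $m$ exactly when
\[
F(k)=\det\bigl(I-S\,e^{ikL}\bigr)
\]
vanishes to order $m$ at $k$. Here $S$ is the $2E\times 2E$ unitary bond-scattering matrix. For standard conditions $S$ is independent of $k$. Also $L=\mathrm{diag}(\ell_1,\ell_1,\dots,\ell_E,\ell_E)$.

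Expanding the determinant shows that $F(k)$ is a finite exponential sum $\sum_j a_j e^{2\pi i\lambda_j k}$, with frequencies $\lambda_j$ that are nonnegative integer combinations of the $\ell_e/2\pi$. So $F$ is a trigonometric polynomial in the sense of the paper. Since $Se^{ikL}$ is unitary for real $k$, and strictly contractive for $\Im k>0$, $F$ has no zeros in the open upper half-plane. Multiplying by $\det(Se^{ikL})^{-1/2}$, a pure exponential times a constant, produces a function that is real on $\r$ and satisfies a functional symmetry. This symmetry gives the same zero-freeness in the lower half-plane, so the resulting trigonometric polynomial $P$ is real-rooted. This is precisely the observation of Kurasov and Sarnak~\cite{KS} that the paper already quotes. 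In the write-up I will cite~\cite{KS} and~\cite{BK} for these facts rather than re-derive them.

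\textbf{Step 2: multiplicities, which I expect to be the main obstacle.} One must check that the order of vanishing of $P$ at $k>0$ equals the multiplicity of $k^2$ in the spectrum. For $k>0$ this agreement is the content of the Kurasov--Sarnak / Band--Berkolaiko theory: the secular determinant counts multiplicity correctly away from $k=0$. At $k=0$ the orders may differ, but that single point lies outside $[T,2T]$ for large $T$, so it is irrelevant. If needed, I would reduce to the known statement by noting that both counting functions grow like $\frac{\sum_e\ell_e}{\pi}T$ (Weyl's law versus the density of zeros of $P$). I would then use the local identity of kernels as in~\cite{BK}.

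\textbf{Step 3: conclusion.} Weyl's law shows $\mathcal K_G$ is infinite, so $P$ has at least one zero. Then
\[
\#\bigl(\mathcal K_G\cap[T,2T]\bigr)=\#\{\ga\in[T,2T]:P(\ga)=0\}
\]
for all large $T$, and the same equality holds with the extra condition $\zeta(\tfrac12+i\ga)=0$ imposed on both sides. Applying Theorem~\ref{thm:main} to $P$ gives the bound $\tfrac23$.
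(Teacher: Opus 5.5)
Your proposal is correct and follows the route the paper has in mind. The paper deduces the corollary ``immediately'' from Theorem~\ref{thm:main}: the $k$-spectrum of the standard Laplacian is, with multiplicities agreeing for $k>0$, the zero multiset of the Kurasov--Sarnak secular polynomial $\det(I-Se^{ikL})$, which is a real-rooted trigonometric polynomial. In Step 2 the citation suffices, since the eigenphases of $Se^{ikL}$ increase strictly in $k$ because $L>0$, so each zero has order exactly $\dim\ker(I-Se^{ikL})$; your Weyl-law fallback would work only if paired with the elementary inequality that spectral multiplicity is at most the order of vanishing, but it is not needed.
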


\subsection{Remarks about the proofs}

The proof of Theorem~\ref{thm:main} follows mollification strategy of Li and Radziwi\l\l\ \cite{LR}. We replace the classical Poisson summation formula for an arithmetic progression with the Kurasov--Sarnak Fourier summation formula for the zero-counting measure of \(P\). This allows us to compare the discrete mollified first and second moments with their continuous counterparts. This comparison is the content of Proposition~\ref{prop:moments}, which may be of independent interest. We then bound the continuous second moment using the theorem of Balasubramanian, Conrey and Heath-Brown \cite{BCH}. A standard Cauchy--Schwarz argument then shows that at least one-third of the zeros of \(P\) in a dyadic interval are not zeta-zero ordinates.

For Theorem~\ref{thm:main2}, we adapt the resonator used by Li and Radziwi\l\l\ \cite[Section 5]{LR}. The significant additional difficulty is that the nonzero frequencies in the crystalline Fourier spectrum of the zeros of \(P\) may lie very close to logarithms of rational numbers. To deal with this issue, we separate the small crystalline frequencies and, crucially, remove a ``thin" collection of exceptional primes from the resonator (we further impose an auxiliary co-primality condition to deal with other diophantine constraints). The polynomial growth of the crystalline spectral measure coefficients controls the large frequencies, which we shall be able to dispense at an acceptable cost.

\section{Preliminaries}

\subsection{Crystalline measures and Lee--Yang polynomials}
\label{qcrsection}
We recall that a \textit{crystalline measure} on $\r$ is a pure point measure of the form $\mu=\sum_{\ga\in\Gamma} a_\ga\del_\ga$ that defines a tempered distribution and whose Fourier transform has the same form $\ft{\mu}=\sum_{\la\in\Lambda} b_\la\del_\la$, where $\Gamma,\Lambda\subset\r$ are discrete subsets and $\del_\ga$ is the delta mass at $\ga$. We call $\mu$ a \textit{Fourier quasicrystal} (FQ) when both $|\mu|$ and $|\ft{\mu}|$ are also tempered distributions. Kurasov and Sarnak \cite{KS} constructed a rich family of positive Fourier quasicrystals from Lee--Yang polynomials. Olevskii and Ulanovskii \cite{OU} proved that Fourier quasicrystals with unit masses arise from real-rooted exponential polynomials, while Alon, Cohen and Vinzant \cite{ACV} proved that every real-rooted exponential polynomial is, up to a zero-free exponential factor, a restriction of a Lee--Yang polynomial. Further structural and gap-distribution results appear in \cite{AV}, while broader classifications of Fourier summation formulas are developed in \cite{Go,GV,V26}.

A polynomial \(L\in\cp[z_1,\ldots,z_M]\) is called \emph{Lee--Yang} if it has no zeros when either
\[
 |z_1|,\ldots,|z_M|<1
 \qquad\text{or}\qquad
 |z_1|,\ldots,|z_M|>1.
\]
In particular, \(L(0)\neq0\). If \(\bo\ell =(\ell_1,\ldots,\ell_M)\in\r_+^{M}\), then $P(x):=L\left(e^{2\pi i\ell_1x},\ldots,e^{2\pi i\ell_Mx}\right)$ is a real-rooted trigonometric polynomial. Conversely, by \cite[Theorem~1.1]{ACV}, every real-rooted trigonometric polynomial admits a representation
\begin{equation}\label{eq:LY-representation}
 P(x)=C e^{2\pi i\lambda_0x} L\!\left( e^{2\pi i\ell_1x},\ldots, e^{2\pi i\ell_Mx}\right),
\end{equation}
where \(C\neq0\), \(\lambda_0\in\r\), \(L\) is Lee--Yang, and the positive numbers \(\ell_1,\ldots,\ell_M\) may be chosen linearly independent over \(\mathbb Q\). 

The following form of the Kurasov--Sarnak summation formula is the
bridge between the zero set of \(P\) and the analytic arguments in this
paper. We use the Fourier-transform normalization
\[
 \ft{f}(\xi)
 :=\int_{\r} f(x) e^{-2\pi ix\xi}\,\mathrm dx.
\]

\begin{theorem}[Kurasov--Sarnak; Alon--Cohen--Vinzant]\label{thm:KS}
Let \(P\) have the representation \eqref{eq:LY-representation}, and let $\bd=(d_1,\ldots,d_M)$ be the degrees of $L$ in each variable $z_j$. Then, for every Schwartz function $\p$, we have
\begin{equation*}
\sum_{\ga,P(\ga)=0} \p(\ga)=c_0\ft{\p}(0)+\sum_{\pm\ba\in\z_+^M} c_{\ba}\ft{\p}(\bo\ell\cdot\ba),
\end{equation*}
where $c_0=\bo\ell  \cdot \bd$, and for $\ba\in\z_+^M$
$$
\ov{c_\ba} = {c_{-\ba}}= -(\ba\cdot \bo\ell) [\bo z^\ba]\log{\left(\frac{L(\bo z)}{L(\bo 0)}\right)}.
$$

\end{theorem}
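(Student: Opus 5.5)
The plan is to reduce to the Lee--Yang polynomial restricted to a line and apply the argument principle. The factor $Ce^{2\pi i\lambda_0 x}$ has no zeros, so the zeros of $P$ coincide with those of $F(x):=L(e^{2\pi i\ell_1x},\ldots,e^{2\pi i\ell_Mx})$. Since $L$ is Lee--Yang and real-rooted on the line, for $\Im x>0$ we have $|e^{2\pi i\ell_jx}|<1$ for all $j$. Hence $F$ has no zeros in the upper half-plane, and $\log(L(\bo z)/L(\bo 0))$ has a convergent power series $\sum_{\ba\in\z_+^M}\beta_\ba\bo z^\ba$ on the unit polydisc. In the lower half-plane all $|z_j|>1$, and $F(x)=\bo z^{\bd}\,\widetilde L(1/\bo z)$, where $\widetilde L(\bo w)=\bo w^{\bd}L(1/\bo w)$ is zero-free on the open polydisc. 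So $\log F$ has an analogous expansion there, with leading linear term $2\pi i(\bo\ell\cdot\bd)x$ plus a power series in $e^{-2\pi i\ell_j x}$.

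\textbf{Step 1: distributional identity.} For $\varepsilon>0$, the argument principle applied to a Schwartz $\p$ (first taking $\p$ with $\ft\p$ compactly supported, so that $\p$ extends to an entire function decaying in horizontal strips) gives
\[
\sum_{\ga}\p(\ga)=\frac{1}{2\pi i}\left(\int_{\r-i\varepsilon}-\int_{\r+i\varepsilon}\right)\p(x)\frac{F'(x)}{F(x)}\,\d x .
\]
The rectangles' vertical sides are controlled because $F$ is almost periodic and zero-free off the real axis. On $\r+i\varepsilon$, differentiating the series gives
\[
\frac{F'}{F}=\sum_\ba 2\pi i(\ba\cdot\bo\ell)\beta_\ba e^{2\pi i(\ba\cdot\bo\ell)x}.
\]
On $\r-i\varepsilon$ we get the constant $2\pi i\,\bo\ell\cdot\bd$ plus the analogous series with negative frequencies, with coefficients $-\ba\cdot\bo\ell$ times the coefficients of $\log(\widetilde L/\widetilde L(0))$. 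Integrating term by term and shifting contours back to $\r$ (legitimate for $\ft\p$ compactly supported), each exponential produces $\ft\p(\mp\bo\ell\cdot\ba)$. The constant term produces $c_0\ft\p(0)$ with $c_0=\bo\ell\cdot\bd$. The positive-frequency coefficients are $c_\ba=-(\ba\cdot\bo\ell)\beta_\ba$, up to the sign and conjugation conventions.

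\textbf{Step 2: symmetry.} To see that the negative-frequency coefficients are the conjugates, use the fact that the zero set is real. The measure $\mu=\sum\del_\ga$ is real, so $\ft\mu(-\xi)=\ov{\ft\mu(\xi)}$. The linear independence of the $\ell_j$ makes the frequencies $\bo\ell\cdot\ba$ distinct, so the coefficients can be read off individually. This yields $c_{-\ba}=\ov{c_\ba}$. Equivalently, one can check directly that $\widetilde L$ is, up to a unimodular constant, $\ov{L(1/\ov{\bo w})}\,\bo w^{\bd}$ on the torus.

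\textbf{Step 3: convergence and extension to all Schwartz $\p$.} This is the main obstacle: justifying term-by-term integration and passing from $\ft\p$ compactly supported to general Schwartz $\p$. For that I would show that $|c_\ba|$ grows at most polynomially in $|\ba|$, so that $\sum_\ba c_\ba\ft\p(\bo\ell\cdot\ba)$ converges absolutely for Schwartz $\p$. The Cauchy estimates on the polydisc of radius $e^{-2\pi\ell_j\varepsilon}$ give
\[
|\beta_\ba|\le \sup_{|z_j|=r_j}\bigl|\log(L/L(0))\bigr|\,r^{-\ba}.
\]
Near the torus, $\log|L|$ blows up only logarithmically, with $\log$ of a polynomial bounded by $O(\log(1/\varepsilon))$ off a thin set. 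Choosing $\varepsilon\asymp1/|\ba|$ then yields $|\beta_\ba|\ll|\ba|^{O(1)}$.

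With this bound, both sides of the identity are continuous in the Schwartz topology. The left side is continuous because the zeros have counting function $cT+O(1)$. The identity therefore extends from the dense class of $\p$ with compactly supported $\ft\p$ to all Schwartz $\p$, completing the proof of Theorem~\ref{thm:KS}.
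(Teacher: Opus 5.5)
The paper does not prove Theorem~\ref{thm:KS}; it quotes it from Kurasov--Sarnak, with the representation \eqref{eq:LY-representation} taken from Alon--Cohen--Vinzant. Your outline is essentially the Kurasov--Sarnak argument: boundary values of $F'/F$ on $\r\pm i\ep$, the power series of $\log L$ above the axis and of the reversed polynomial below it, symmetry from the reality of $\sum\del_\ga$, and extension from $\ft\p\in C_c^\infty$ via a polynomial bound on the coefficients. It goes through, but one step with real content is only asserted.

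The exterior Lee--Yang condition gives $\widetilde L(\bo w)=\bo w^{\bd}L(1/\bo w)\neq0$ only when every $w_j\neq0$. You need it on the whole open polydisc $\mathbb{D}^M$ in order to expand $\log\widetilde L$ in a power series. It is also exactly what makes the constant term below the axis equal $2\pi i\,\bo\ell\cdot\bd$, i.e.\ $c_0=\bo\ell\cdot\bd$. It holds because $d_j$ is the exact degree of $L$ in $z_j$. The zero set of $\widetilde L$ in $\mathbb{D}^M$ is empty or of pure codimension one. Since it lies in $\bigcup_j\{w_j=0\}$, it would contain a nonempty open piece of some hyperplane $\{w_j=0\}$. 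That forces $\widetilde L|_{w_j=0}\equiv0$, i.e.\ $\deg_{z_j}L<d_j$, a contradiction.

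Relatedly, the identity ``on the torus'' you offer as an equivalent is false. For $|w_j|=1$ it reads $L(\ov{\bo w})=c\,\ov{L(\bo w)}$, which would put all coefficients of $L$ on one line through $0$; this fails for the Lee--Yang polynomial $1+iz$. The correct statement is self-inversiveness, $\widetilde L(\bo w)=c\,\ov{L(\ov{\bo w})}$ with $|c|=1$, which would also give $c_{-\ba}=\ov{c_\ba}$ directly. Your derivation of the symmetry from the reality of the measure and the distinctness of the frequencies $\bo\ell\cdot\ba$ is fine as it stands.

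Two smaller corrections. First, after the contour shift the series on $\r+i\ep$ produces $\ft\p(-\bo\ell\cdot\ba)$. So it computes the negative-frequency coefficient $c_{-\ba}=-(\ba\cdot\bo\ell)\beta_\ba$, exactly as in the statement. The series on $\r-i\ep$ gives $c_\ba=-(\ba\cdot\bo\ell)\widetilde\beta_\ba$, where $\widetilde\beta_\ba$ are the coefficients of $\log(\widetilde L/\widetilde L(\bo 0))$. Your Step~1 has these the wrong way round.

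Second, in Step~3 a Cauchy estimate with a supremum needs the lower bound for $|L|$ on the entire torus $|z_j|=r_j$, not merely off a thin set. It does hold everywhere. With $D=d_1+\cdots+d_M$, the polynomial $t\mapsto L(t\bo z)/L(\bo 0)=\prod_k(1-t/t_k)$ has at most $D$ roots, all with $|t_k|\geq1/\max_j r_j$. Hence on that torus $|L(\bo z)/L(\bo 0)|\gg\ep^{D}$ and $|\arg(L(\bo z)/L(\bo 0))|\leq\pi D/2$.

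Better still, $\log|L/L(\bo 0)|$ is pluriharmonic on $\mathbb{D}^M$, bounded above, and has mean $0$ on every torus, so its $L^1$ norms on tori are uniformly bounded. The integral form of Cauchy's formula then gives $|\beta_\ba|=O(1)$, i.e.\ $c_\ba=O(|\ba|)$. This is the bound the paper quotes from Kurasov--Sarnak.
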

Here, $[\bo z^\ba]\log\!\left(L(\bo z)/L(\bo 0)\right)$ denotes the coefficient of $\bo z^\ba$ in the power-series expansion of $\log\!\left(L(\bo z)/L(\bo 0)\right)$ at $\bo 0$, and the sum $\sum_{P(\ga)=0}\p(\ga)$ counts the zeros with multiplicity. We also write $\z_+^M:=\{\ba=(a_1,\ldots,a_M):a_j\in\z,\, a_j\geq0,\ \text{and}\, \ba\neq(0,\ldots,0)\}$ and $\r_+:=\{x\in\r:x>0\}$. Finally, it is shown in \cite{KS} that $c_\ba=O_P(|\ba|)$ and hence, for any $A\geq1$, we have
$$
\sum_{\ba\in \z_+^M,  |\ba \cdot \bo\ell| \leq A} |c_\ba| = O_P(A^{M+1}).
$$
By \cite[Theorem 1.9]{AV}, we also have $\#\{\ga\in[T,2T]; P(\ga)=0\}=c_0T(1+o_P(1))$.

\subsection{Discrete first and second mollified moments}

We first record some preliminaries on the Riemann zeta function. As usual, we let $\zeta(s)$ denote the Riemann zeta function
\begin{align*}
&\zeta(s)=\sum_{n\geq1} \frac{1}{n^s} \quad  (\Re(s)>1).
\end{align*}
For\footnote{There is nothing special about the lower bound $1/10$. Any positive number $<1/2$ would work.} $1/10<\theta<1/2$ consider the mollifier
\begin{align*}
&M_{\theta}(s):=\sum_{n\leq T^\theta} \frac{b(n)}{n^s},
\end{align*}
where
\begin{align*}
b(m):=\mu(m)\left(1-\frac{\log m}{\log T^\theta}\right)
\quad\text{for }m\leq T^\theta,
\qquad b(m):=0\quad\text{otherwise}.
\end{align*}
Our argument is based on estimates for the discrete mollified first and second moments. Given any $\p\in C^\infty_c(\r)$ satisfying \begin{align}
    \label{eq:phi-properties}
    0\leq\p\leq1,\qquad \supp(\p)\subset[1,2],
\end{align} 
we set
\begin{align*}
I:=\sum_{\ga;P(\ga)=0} \zeta\cdot M_\theta\left(\frac{1}{2}+i\ga\right)\p\left(\frac{\ga}{T}\right) \ \ \text{and} \ \ 
J:=\sum_{\ga;P(\ga)=0} \left|\zeta\cdot M_\theta\left(\frac{1}{2}+i\ga\right)\right|^2\p\left(\frac{\ga}{T}\right).
\end{align*}
Regarding $\Lambda:=\{\ga;P(\ga)=0\}$ as a multiset, we define the proportion of its elements that are not ordinates of zeros of $\zeta$ on the critical line by
\begin{align*}
G(T):=\frac{\#\{\ga\in[T,2T]\cap\Lambda;\zeta\left(\frac{1}{2}+i\ga\right)\neq0\}}{\#\{\ga\in[T,2T]\cap\Lambda\}}.
\end{align*}
Theorem~\ref{thm:main} would then follow from the estimate
\begin{align*}
G(T)\geq \frac{1}{3}-o(1).
\end{align*}
From now on, we fix a real-rooted trigonometric polynomial $P$ and use the summation formula of Theorem \ref{thm:KS}, and the notation introduced there. We follow initially the strategy laid out in \cite{LR}. The Cauchy--Schwarz inequality implies
\begin{align}
\label{eq:IJ-cauchy}
|I|^2\leq (1+o(1))J\cd G(T)\cd c_0T.
\end{align}
Thus, the proof of Theorem \ref{thm:main} reduces to estimates for the mollified first and second moments, which are supplied by the proposition below.

\begin{proposition}[The mollified first and second moments]
\label{prop:moments}
Let $1/10<\theta<1/2$ and $\p\in C^\infty_c(\r)$ with $0\leq\p\leq1$ and $\supp(\p)\subset[1,2]$. Then, as $T\ra\infty$, we have
\begin{align*}
&I=\ft{\p}(0)c_0T+o_{\p,\theta}(T), \quad \text{ and }\\
&J=c_0\int_{\r} \left|\zeta\cd M_\theta\left(\frac{1}{2}+it\right)\right|^2\p\left(\frac{t}{T}\right)\d t+o_{\p,\theta}(T),
\end{align*}
where the implicit constant in the $o_{\p,\theta}(T)$ terms may depend on $\p$ and $\theta$.
\end{proposition}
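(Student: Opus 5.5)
The plan is to apply the summation formula of Theorem~\ref{thm:KS} to the two test functions
\[
f_1(t):=\zeta\cd M_\theta\left(\tfrac12+it\right)\p\left(\tfrac tT\right),\qquad
f_2(t):=\left|\zeta\cd M_\theta\left(\tfrac12+it\right)\right|^2\p\left(\tfrac tT\right).
\]
Both are smooth and compactly supported, hence Schwartz. The zero-frequency term gives $c_0\ft{f_j}(0)$. For $f_2$ this is exactly the claimed main term. For $f_1$ the standard mollified first moment gives $\ft{f_1}(0)=\int\zeta M_\theta(\tfrac12+it)\p(t/T)\,\d t=\ft{\p}(0)T+o(T)$, as in \cite{LR}. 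So everything reduces to showing
\[
\sum_{\pm\ba\in\z_+^M}|c_\ba|\,\bigl|\ft{f_j}(\bo\ell\cdot\ba)\bigr|=o(T).
\]

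\emph{Step 1: Dirichlet polynomial model and truncation of large frequencies.} Using the approximate functional equation (with smooth weights) on $t\asymp T$, I write $\zeta(\tfrac12+it)$ as $\sum_{n\le T^{1/2+\ep}}n^{-1/2-it}$ plus $\chi(\tfrac12+it)$ times a similar sum, with an error $O(T^{-1/2+\ep})$. I then expand $f_1$, and $f_2$ after multiplying out, into sums of terms of the form
\[
\frac{\alpha(m)\overline{\beta(n)}}{\sqrt{mn}}\,(n/m)^{it}\,\p(t/T),
\]
possibly with a factor of $\chi$. For $\ft{f_j}(\xi)$ with $\xi=\bo\ell\cdot\ba$, the phase is $t\bigl(\log(n/m)-2\pi\xi\bigr)$, plus $\arg\chi$ in the $\chi$-terms, whose derivative is $-\log(t/2\pi)$. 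Since all the logarithms involved are $O(\log T)$, for $|\xi|\ge C\log T$ the phase derivative is $\gg|\xi|$. Repeated integration by parts then gives $\ft{f_j}(\xi)\ll_A T^{-A}|\xi|^{-A}$. Together with $c_\ba=O(|\ba|)$, this makes the tail of the frequency sum negligible. For the remaining frequencies $|\xi|\le C\log T$, the bound $\sum|c_\ba|=O((\log T)^{M+1})$ from Theorem~\ref{thm:KS} means it suffices to prove
\[
\ft{f_j}(\xi)=O\bigl(T(\log T)^{-M-2}\bigr)
\]
uniformly for $0<|\xi|\le C\log T$.

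\emph{Step 2: the first moment at a nonzero frequency.} In the main (non-$\chi$) part of $\ft{f_1}(\xi)$ the integral is $\ll T^{-A}$ unless $|\log(k)+2\pi\xi|\le T^{-1+\ep}$, where $k=mn$ with $m\le T^{1/2+\ep}$ and $n\le T^\theta$, so $k\le T^{1-\delta}$ because $\theta<1/2$. Consecutive integers of that size have log-spacing $\ge T^{-1+\delta}$, so at most one $k$ survives, and only if $\xi<0$. It contributes
\[
T\,\ft{\p}(\cdot)\,k^{-1/2}(1*b)(k)=T\,k^{-1/2}\frac{\Lambda(k)}{\theta\log T}\ll\frac{T}{\log T},
\]
using $(1*b)(k)=\Lambda(k)/\log T^\theta$ for $1<k\le T^\theta$, which is the Li--Radziwi\l\l\ cancellation. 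This is only $O(T/\log T)$, which is not quite enough to beat the factor $(\log T)^{M+1}$. I will therefore regroup and sum over $\ba$ first: $k=e^{-2\pi\bo\ell\cdot\ba}$ determines $\ba$ up to finitely many choices, by linear independence of the $\ell_j$ together with $c_\ba=O(|\ba|)$. The total is then
\[
\frac{T}{\log T}\sum_{k}\frac{\Lambda(k)\log k}{\sqrt k}\cdot\bigl(\text{bounded multiplicity}\bigr)\cdot\bigl|\ft{\p}(\text{shift})\bigr|,
\]
which I will bound by $o(T)$ using the rapid decay of $\ft{\p}$ away from the exact resonance. If this regrouping turns out to be delicate, the fallback is to shrink the frequency cutoff to $|\xi|\le K$ with $K\to\infty$ slowly, handling $K<|\xi|\le C\log T$ by a cruder nonresonance argument. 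In the $\chi$-terms the stationary point $t_0=2\pi k e^{2\pi\xi}$ must lie in $[T,2T]$. Stationary phase gives $O(T^{1/2}\cdot T^{\theta/2+\ep})=o(T)$ per frequency, since the sum is then over short lengths.

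\emph{Step 3: the second moment, which I expect to be the main obstacle.} Here $\ft{f_2}(\xi)$ is a twisted mollified second moment. After the approximate functional equation it becomes a sum over $h,k\le T^\theta$ and $m,n\le T^{1/2+\ep}$ of integrals of $\bigl(\tfrac{nk}{mh}\bigr)^{it}e^{-2\pi i\xi t}$. These are negligible unless $\bigl|\log\tfrac{mh}{nk}+2\pi\xi\bigr|\le T^{-1+\ep}$. The difficulty is that the products $mh$ and $nk$ can reach $T^{1/2+\theta+\ep}$, so the rationals $mh/nk$ can have height close to $T$ and their spacing is no longer automatically larger than $T^{-1}$. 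I will first try restricting to $m,n\le T^{1/2-\theta-\ep'}$ via a shorter, unbalanced approximate functional equation, or equivalently using the two-term Hardy--Littlewood form with $\chi$-factors. Then all rationals have height $\le T^{1-\delta}$, at most one reduced ratio $r=e^{-2\pi\xi}$ resonates, and its contribution mimics the BCH diagonal evaluation with the twist $r^{it}$. For $r\neq1$, the Möbius structure of $b$ should again produce a saving $\Lambda$-type$/\log T$ as in \cite{LR}. If the unbalanced approach fails, I will fall back on the twisted second moment formula of Balasubramanian--Conrey--Heath-Brown in the form $\int|\zeta|^2(h/k)^{it}$, applied uniformly for $|\xi|\le C\log T$ and summed against $|c_\ba|$ as in Step 2.
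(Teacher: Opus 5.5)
Your framework matches the paper's: apply Theorem~\ref{thm:KS}, take the frequency-$0$ term as the main term, and show that a nonzero frequency $\bo\ell\cd\ba$ only sees Dirichlet-polynomial terms whose ratio lies within relative distance about $T^{-1+\ep}$ of $e^{2\pi\bo\ell\cd\ba}$.

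\textbf{First moment.} This goes through, but not via the target you set at the end of Step~1. A uniform bound $O(T(\log T)^{-M-2})$ is false in general: if some $\bo\ell\cd\ba=\log 2/2\pi$ with $c_\ba\neq0$, the resonant term with $k=2$ has size $\asymp T/\log T$. The claim that $k$ determines $\ba$ up to boundedly many choices is not justified by linear independence, and you do not need it. Keep the factor $k^{-1/2}\asymp e^{-\pi\bo\ell\cd\ba}$ for the unique resonant $k$ attached to each $\ba$. The total is then $\ll\frac{T}{\log T}\sum_{\ba}|c_\ba|e^{-\pi(1-2\ep)\bo\ell\cd\ba}\ll T/\log T$, which is how the paper bounds $R_{\le T}$. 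Also, your identity $(1*b)(k)=\Lambda(k)/\log T^\theta$ only covers $k\le T^\theta$. For larger $k$, use $|b(m)-\mu(m)|\le\log m/(\theta\log T)$ to get $(1*b)(k)\ll k^{\ep}/\log T$.

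\textbf{The gap: Step~3.} This is the heart of the proposition, and your primary plan cannot work. It needs every resonant ratio $mh/(nk)$ to have small height, so that at most one reduced fraction lies in the resonance window.

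No approximate functional equation achieves this. The two pieces have lengths $x,y$ with $xy\asymp t$. Since $|\chi(\tfrac12+it)|=1$, $|\zeta|^2$ contains $\bigl|\sum_{m\le y}m^{-1/2+it}\bigr|^2$. Shortening one side to $T^{1/2-\theta-\ep'}$ lengthens the other to $T^{1/2+\theta+\ep'}$, and after multiplying by $|M_\theta|^2$ the numerators and denominators reach $T^{1/2+2\theta}$. Whatever the balance, reduced fractions $r/s$ with $rs$ as large as $T^{1+2\theta}$ survive. A window of relative width $T^{-1+\ep}$ around $e^{2\pi\bo\ell\cd\ba}$ can contain many of them, so uniqueness fails.

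Your fallback does not repair this. The Balasubramanian--Conrey--Heath-Brown formula evaluates $\int|\zeta|^2(h/k)^{it}w(t)\,\d t$ for rational twists and weights with small derivatives. Here the weight $e^{-2\pi i\xi t}\p(t/T)$ is a twist by the generally irrational number $e^{-2\pi\xi}$. Evaluating that requires exactly the Diophantine analysis that is missing.

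\textbf{What is needed.} The paper uses a dichotomy on the reduced denominator. With the $|\zeta|^2$ approximate functional equation (Lemma~\ref{lemma:approx_riem_zeta}), group terms by the reduced fraction $r/s$ of $mk/(nh)$, so that $rs<T^{1+\ep+2\theta}$. Split at $K_\ba$, where $K_\ba^2=T^{1-\tilde\eta}e^{-2\pi\bo\ell\cd\ba}/4$.

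For $s\ge K_\ba$: Farey spacing bounds the number of admissible fractions with $2^{n-1}K_\ba\le s<2^nK_\ba$ by $2(2^nK_\ba)^2e^{2\pi\bo\ell\cd\ba}T^{-1+\tilde\eta}+1$. With the weight $(rs)^{-1/2}$ and the divisor bound, this gives $T^{1/2+\theta+O(\ep)}$, which is $o(T)$ precisely because $\theta<1/2$.

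For $s<K_\ba$: the admissible fraction $(r_\ba,s_\ba)$ is unique, and its contribution is $\ll|c_\ba|\,T(r_\ba s_\ba)^{-1/2}\sup_t|F(r_\ba,s_\ba,t)|$. The Li--Radziwi\l\l\ bound $F(r,s,t)\ll(rs)^\ep/\log T$ for coprime $r,s$ with $rs>1$ (Proposition~\ref{prop:bounding_F}), together with $r_\ba\asymp s_\ba e^{2\pi\bo\ell\cd\ba}$, gives a total of $\ll\frac{T}{\log T}\sum_\ba|c_\ba|e^{-\pi(1-2\ep)\bo\ell\cd\ba}=o(T)$.

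Your sketch only asserts the second half (the ``$\Lambda$-type$/\log T$ saving'') without argument, and has nothing playing the role of the first.
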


We use this proposition in conjunction with an estimate of Balasubramanian, Conrey, and Heath-Brown~\cite{BCH} for handling $J$. Their result states that, if $\theta<9/17$, then
$$\int_{T}^{2T} \left|\zeta\cd M_\theta\left(\frac{1}{2}+it\right)\right|^2\d t\le \left(1+\frac{1}{\theta}+o_\theta(1)\right)T,$$
which, together with the assumption $\p(x)\le 1$, implies that $J\le c_0\left(1+\frac{1}{\theta}+o_\theta(1)\right)T$. Combining this estimate with~\eqref{eq:IJ-cauchy} gives
\begin{align*}
    G(T)\geq\frac{\ft{\p}(0)^2}{1+1/\theta+o_{\p,\theta}(1)}=\ft{\p}(0)^2\frac{\theta}{1+\theta}+o_{\p,\theta}(1).
\end{align*}
Now, for fixed $\p$ and $\theta<1/2$, we obtain
\begin{align*}
    \liminf_{T\ra\infty} G(T)\geq \ft{\p}(0)^2\frac{\theta}{1+\theta}.
\end{align*}
We then let $\theta\uparrow 1/2$ and take a sequence of admissible functions $\p$ such that $\ft{\p}(0)\ra1$ to obtain
\begin{align*}
    \liminf_{T\ra\infty} G(T)\geq \frac{1}{3}.
\end{align*}
This establishes Theorem \ref{thm:main}. It remains to prove Proposition~\ref{prop:moments} and to this end 
we estimate the first moment $I$ in Section~\ref{sec:I} and the second moment $J$ in Section~\ref{sec:J}.

\section{The first moment}\label{sec:I}
We prove that $I=\ft{\p}(0)c_0T+o_{\p,\theta}(T)$. We begin with the following  standard approximation for the Riemann zeta function:
\begin{align*}
\zeta\left(\frac{1}{2}+it\right)=\sum_{n\leq T} \frac{1}{n^{1/2+it}}+O\left(\frac{1}{T^{1/2}}\right), \quad T\le t \le 2T.
\end{align*}
This is a consequence of~\cite[Theorem~4.11]{Titch}, and it gives
\begin{align*}
I&=\sum_{P(\ga)=0} \left(\sum_{n\leq T} \frac{1}{n^{1/2+i\ga}}+O\left(\frac{1}{T^{1/2}}\right)\right)\sum_{m\leq T^\theta} \frac{b(m)}{m^{1/2+i\ga}}\p\left(\frac{\ga}{T}\right)\\
&=\sum_{P(\ga)=0} \sum_{n\leq T} \frac{1}{n^{1/2+i\ga}} \sum_{m\leq T^\theta} \frac{b(m)}{m^{1/2+i\ga}}\p\left(\frac{\ga}{T}\right)+o(T)
\end{align*}
because $\theta<1/2$. Writing $s=mn$, applying Theorem~\ref{thm:KS} to $\psi_s(x)=s^{-ix}\p(x/T)$, and using $\ft{\psi_s}(\xi)=T\ft{\p}\left(T(\xi+\log{s}/2\pi)\right)$, we obtain
\begin{align}
\label{eq:IR}
I&=\sum_{m\leq T^\theta} b(m)\sum_{n\leq T} \frac{1}{\sqrt{mn}}\sum_{P(\ga)=0} \frac{1}{(mn)^{i\ga}}\p\left(\frac{\ga}{T}\right)+o(T) \nonumber \\
&=\sum_{s\leq T^{1+\theta}} \frac{1}{\sqrt{s}} \sum_{P(\ga)=0} \frac{1}{s^{i\ga}} \p\left(\frac{\ga}{T}\right)A_{\theta,T}(s)+o(T) \nonumber \\
&=\sum_{s\leq T^{1+\theta}} \frac{1}{\sqrt{s}} \left\{c_0 T\ft{\p}\left(T\frac{\log{s}}{2\pi}\right)+\sum_{\pm\ba\in\z_+^M} c_{\ba} T\ft{\p}\left(T\left(\bo\ell\cd\ba+\frac{\log{s}}{2\pi}\right)\right)\right\}A_{\theta,T}(s)+o(T) \nonumber \\
&=c_0T\ft{\p}(0)+R_1+R_{2+}+R_{2-}+o(T),
\end{align}
where
\begin{align*}
    R_1&:=\sum_{\pm\ba\in\z_+^M} c_\ba T\ft{\p}\left(T\bo\ell\cd\ba\right)+\sum_{1<s\leq T^{1+\theta}} \frac{1}{\sqrt{s}} c_{0} T\ft{\p}\left(T\frac{\log{s}}{2\pi}\right)A_{\theta,T}(s),\\
    R_{2\pm}&:=\sum_{1<s\leq T^{1+\theta}} \frac{1}{\sqrt{s}}\sum_{\pm\ba\in\z_+^M}  c_{\ba} T\ft{\p}\left(T\left(\bo\ell\cd\ba+\frac{\log{s}}{2\pi}\right)\right)A_{\theta,T}(s), \text{ and} \\
    A_{\theta,T}(s)&:=\sum_{\substack{m|s\\ m\leq T^\theta\\ s/m\leq T}} b(m).
\end{align*}
Here the $\pm$ sign refers to $\pm\ba$. Observe that\footnote{In this paper, $\tau(k)=\sum_{d\mid k} 1$ denotes the divisor function, which satisfies $\tau(k)\ll_\ep k^\ep$ for any $\ep>0$.} $|A_{\theta,T}(s)|\leq \tau(s)\ll_\eta s^\eta$ for any $\eta>0$, and that
\begin{align*}
    \sum_{\ba\in\z_+^M} \frac{|c_{\ba}|}{|\bo\ell\cd\ba|^k}=O(1)
\end{align*}
for $k>M+1$. Since $\p\in C_c^\infty(\r)$, it follows that $\ft{\p}(x)\ll_{\p,k} |x|^{-k}$ for $|x|\geq1$ and any $k\ge 1$. Hence, we obtain $R_1=o_{\p}(1)$ and $R_{2+}=o_{\p}(1)$.

\subsection*{Key step:}
Now we claim that $R_{2-}=o_{\p,\theta}(T)$. 
Indeed, the difficulty arises when $T\bo\ell\cd\ba\left(-1+\frac{\log{s}}{2\pi\bo\ell\cd\ba}\right)$ is close to $0$. To account for this case, we divide $R_{2-}$ into ``good'' and ``bad'' parts. For $0<\ep<1$, to be chosen below, let
\begin{align*}
G:=&\left\{(\ba,s);\ba\in\z_+^M, 1<s\leq T^{1+\theta},\left|1-\frac{\log{s}}{2\pi\bo\ell\cd\ba}\right|>\frac{1}{T^{1-\ep}}\right\}, \text{ and }\\
B:=&\left\{(\ba,s);\ba\in\z_+^M, 1<s\leq T^{1+\theta},\left|1-\frac{\log{s}}{2\pi\bo\ell\cd\ba}\right|\leq\frac{1}{T^{1-\ep}}\right\}
\end{align*}
and let $R_G$ and $R_B$ denote the sums $R_{2-}$ restricted to $G$ and $B$, respectively. Using $|A_{\theta,T}(s)|\ll_\ep s^\ep$ and $\ft{\p}(x)\ll_{\p,\ep} |x|^{-k}$ for $k=k(\ep)=\left\lceil 3(\frac{1}{2\ep}+1)\right\rceil$, which we may assume satisfies $k>M+1$ by taking a smaller $\ep$ if necessary, we conclude that $R_G=o_{\p,\theta,\ep}(T)$.

To estimate $R_B$, we consider the cases $s\leq T$ and $s>T$ separately. If $s\leq T$, then $A_{\theta, T}(s)=\sum_{m|s} b(m)$, and since $b(m)=\mu(m)+O\left(\frac{\log{m}}{\log{T}}\right)$, we may use $\sum_{m|s} \mu(m)=0$ to deduce that $|A_{\theta,T}(s)|\ll_{\ep} s^\ep/\log{T}$. Denote by $R_{\leq T}$ the sum $R_B$ restricted to $s\leq T$. Then
\begin{align*}
    R_{\leq T}\ll_{\ep} \frac{T}{\log{T}} \sum_{\ba\in\z_+^M} |c_{\ba}| \sum_{\substack{1<s\leq T\\ s\in B_\ba}} s^{-1/2+\ep},
\end{align*}
where $B_{\ba}:=\{s;(\ba,s)\in B\}$. For a fixed $\ba\in\z_+^M$, we now derive a lower bound for $s$, an upper bound for $\ba$, and an estimate for $\# B_\ba$. If $s\in B_\ba$, then
\begin{equation}
\label{eq:size_set_B_a}
2\pi\bo\ell\cd\ba\left(1-\frac{1}{T^{1-\ep}}\right)\leq\log{s}\leq2\pi\bo\ell\cd\ba\left(1+\frac{1}{T^{1-\ep}}\right).
\end{equation}  
For $T$ sufficiently large, we have $(1+\ep)^{-1}\leq 1-T^{\ep-1}$, from which we obtain $2\pi\bo\ell\cd\ba\leq(1+\ep)\log{s}\leq (1+\varepsilon)\log{T}$ (otherwise $B_{\ba}=\emptyset$), and $e^{2\pi\bo\ell\cd\ba}\leq s^{1+\varepsilon}$. Moreover, by the mean value theorem,
\begin{align*}
\#B_{\ba}&\leq \exp{\left(2\pi\bo\ell\cd\ba\left(1+\frac{1}{T^{1-\ep}}\right)\right)}-\exp{\left(2\pi\bo\ell\cd\ba\left(1-\frac{1}{T^{1-\ep}}\right)\right)}+1\\
&\leq 4\pi\bo\ell\cd\ba e^{2\pi(1+\varepsilon)\bo\ell\cd\ba}\frac{1}{T^{1-\varepsilon}}+1.
\end{align*}
Combining these estimates with the fact that $|c_\ba|=O(|\ba|)$, we obtain
\begin{align*}
    R_{\leq T}&\ll_\ep \frac{T}{\log{T}} \sum_{\substack{\ba\in\z_+^M\\ 2\pi\bo\ell\cd\ba\leq(1+\ep)\log{T}}} |c_\ba| \sum_{\substack{1<s\leq T\\ s\in B_\ba}} s^{-1/2+\ep}\\
    &\ll_\ep \frac{T}{\log{T}} \sum_{\substack{\ba\in\z_+^M \\ 2\pi\bo\ell\cd\ba\leq (1+\varepsilon)\log{T} }} |\ba|\#B_{\ba} \exp\left(\frac{2\pi\bo\ell\cd\ba}{1+\ep}\left(-\frac{1}{2}+\ep\right)\right)\\
    &\ll_\ep \frac{T}{\log{T}}R_{\leq T,1}+ \frac{T^{\varepsilon}}{\log{T}}R_{\leq T,2},
\end{align*}
where
\begin{align*}
    R_{\leq T,1}=\sum_{\substack{\ba\in\z_+^M \\ 2\pi\bo\ell\cd\ba\leq (1+\varepsilon)\log{T} }} |\ba| \exp\left(\frac{2\pi\bo\ell\cd\ba}{1+\ep}\left(-\frac{1}{2}+\ep\right)\right)=O(1)
\end{align*}
provided that we take $\ep<1/4$, and
\begin{align*}
R_{\leq T,2}&=\sum_{\substack{\ba\in\z_+^M \\ 2\pi\bo\ell\cd\ba\leq (1+\varepsilon)\log{T} }} |\ba| 4\pi\bo\ell\cd\ba\exp\left[2\pi\bo\ell\cd\ba\left(1+\ep+\frac{1}{1+\ep}\left(-\frac{1}{2}+\ep\right)\right) \right]\\
&\ll T^{(1+\ep)^2-1/2+\ep}   \log{T}  \sum_{\substack{\ba\in\z_+^M \\ 2\pi\bo\ell\cd\ba\leq (1+\varepsilon)\log{T} }} |\ba|\\
&\ll  T^{(1+\ep)^2-1/2+\ep}   (\log{T})^{M+2} \ll_\ep T^{1/2+\ep(M+6)}.
\end{align*}
Taking a smaller $\ep>0$, if necessary, so that $1/2+\ep(M+6)+\ep<3/4$, we obtain $T^\ep R_{\leq T,2}/\log{T}\ll_\ep T^{3/4}/\log{T}$. Therefore, $R_{\leq T}=o_\ep(T)$.

It remains to estimate the sum
\begin{align*}
    R_{>T}&:=T\sum_{T<s\leq T^{1+\theta}} s^{-1/2}\sum_{\substack{\ba\in\z_+^M\\ (\ba,s)\in B}}  \ov{c_{\ba}} \ft{\p}\left(T\bo\ell\cd\ba\left(-1+\frac{\log{s}}{2\pi\bo\ell\cd\ba}\right)\right)A_{\theta,T}(s).
\end{align*}
From \eqref{eq:size_set_B_a}, it follows that, for $T$ sufficiently large, $2\pi\bo\ell\cd\ba \leq (1+\theta+2\ep) \log{T}$. Using the trivial estimate $|A_{\theta,T}(s)|\ll_\ep s^\ep$ and $s^{-1/2+\ep}<T^{-1/2+\ep}$, we obtain
\begin{align*}
    R_{>T}&\ll_\ep T \sum_{\substack{\ba\in\z_+^M\\ 2\pi\bo\ell\cd\ba\leq (1+\theta+2\ep)\log{T}}} |\ba| \sum_{\substack{T<s\leq T^{1+\theta}\\ s\in B_\ba}} s^{-1/2+\ep}\\
    &\ll_\ep T \sum_{\substack{\ba\in\z_+^M\\ 2\pi\bo\ell\cd\ba\leq (1+\theta+2\ep)\log{T}}} |\ba|  T^{-1/2+\ep}\# B_\ba\\
    &\ll T^{1/2+\ep} R_{>T,1}+T^{-1/2+2\ep} R_{>T,2},
\end{align*}
where
\begin{align*}
    R_{>T,1}&=\sum_{\substack{\ba\in\z_+^M\\ 2\pi\bo\ell\cd\ba\leq (1+\theta+2\ep)\log{T}}} |\ba| \ll (\log{T})^{M+1}, \text{ and }\\
    R_{>T,2}&=\sum_{\substack{\ba\in\z_+^M\\ 2\pi\bo\ell\cd\ba\leq (1+\theta+2\ep)\log{T}}} |\ba| 4\pi\bo\ell\cd\ba e^{2\pi\bo\ell\cd\ba(1+\ep)}\ll T^{(1+\ep)(1+\theta+\ep)} \log{T} \sum_{\substack{\ba\in\z_+^M\\ 2\pi\bo\ell\cd\ba\leq (1+\theta+2\ep)\log{T}}} |\ba|\\
    &\ll T^{1+\theta+6\ep} (\log{T})^{M+2},
\end{align*}
hence $R_{>T}\ll_\ep T^{1/2+\theta+\ep(M+9)}=o_\ep(T)$ if we choose $\ep$ so that $\ep(M+9)<1/2-\theta$. Therefore, for fixed $1/10<\theta<1/2$, we choose $\ep>0$ to satisfy all the requirements above, which depend only on $\theta$ and the fixed polynomial $P$, and obtain $I=\ft{\p}(0) c_0 T+o_{\p,\theta}(T)$, as desired.

\section{The second moment}
\label{sec:J}
We next prove the second estimate of Proposition~\ref{prop:moments}, namely,
$$J=c_0\int_{\r} \left|\zeta\cd M_\theta\left(\frac{1}{2}+it\right)\right|^2\p\left(\frac{t}{T}\right)\d t+o_{\p,\theta}(T).$$
Here we recall that
$$J=\sum_{P(\ga)=0} \left|\zeta\cdot M_\theta\left(\frac{1}{2}+i\ga\right)\right|^2\p\left(\frac{\ga}{T}\right).$$
We will need a finer estimate for $|\zeta(1/2+it)|$ than the one used for the first moment. Let $G$ be an even, real entire function such that $G(0)=1$ and that has rapid decay along vertical strips: $|G(x+iy)|\ll_{\al,A}(1+|y|)^{-A}$ uniformly in the range $-\al<x<\al$, for any $0<\al<\infty$ and $A>0$. For $\ep>0$, define
\begin{equation}
\label{eq:W_def}
W(x):=\frac{1}{2\pi i} \int_{(\ep)} x^{-w}G(w)\frac{\d w}{w},\qquad x>0.
\end{equation}
The function $W$ has the following properties: it is real-valued, $W\in C^{\infty}(0,\infty)$, and for every $A>0$ it satisfies $W(x)=1+O_A(x^A)$ as $x\to0^+$ and $W(x)\ll_A x^{-A}$ as $x\ra\infty$.

We now use the following approximate functional equation for the Riemann zeta function on the critical line; see \cite[Lemma 4]{LR}.

\begin{lemma}[Approximate functional equation for the Riemann zeta function]
\label{lemma:approx_riem_zeta}
For $T<t<2T$, $\ep>0$, and $W$ as above, we have
\begin{equation*}
\left|\zeta\left(\frac{1}{2}+it\right)\right|^2=2\sum_{mn<T^{1+\ep}} \frac{1}{\sqrt{mn}}\cdot\left(\frac{m}{n}\right)^{it}W\left(\frac{2\pi mn}{t}\right)+O(T^{-2/3}).
\end{equation*}
\end{lemma}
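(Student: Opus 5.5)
We prove the approximate functional equation by the standard contour-shift argument applied to the completed square of $\zeta$. Set $s=\frac12+it$ and
\[
\Lambda(w):=\pi^{-w/2}\Gamma\!\left(\tfrac{w}{2}\right)\zeta(w),\qquad \Lambda(w)=\Lambda(1-w).
\]
Consider
\[
F(u):=\frac{\Gamma\!\left(\tfrac{s+u}{2}\right)\Gamma\!\left(\tfrac{\bar s+u}{2}\right)}{\Gamma\!\left(\tfrac{s}{2}\right)\Gamma\!\left(\tfrac{\bar s}{2}\right)}\,\pi^{-u}\,\zeta(s+u)\zeta(\bar s+u),
\]
which satisfies $F(0)=|\zeta(\frac12+it)|^2$. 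Since $1-s=\bar s$ on the critical line, the functional equation gives
\[
\Lambda(s+u)\Lambda(\bar s+u)=\Lambda(\bar s-u)\Lambda(s-u),
\]
so the completed product $\Lambda(s+u)\Lambda(\bar s+u)$ is invariant under $u\mapsto -u$.

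\textbf{Step 1 (contour shift).} Integrate $\Lambda(s+u)\Lambda(\bar s+u)G(u)\,\frac{\d u}{u}$ along $\Re u=1$ and shift to $\Re u=-1$. The residue at $u=0$ is $|\Lambda(s)|^2$. The rapid vertical decay of $G$, together with Stirling's formula, justifies the shift. The poles of $\zeta(s\pm u)$ occur at $u=\pm(\frac12-it)$; there $G$ has size $\ll_A T^{-A}$, while the $\Gamma$-factors, normalized by $|\Gamma(s/2)|^2$, are at most polynomial in $T$ (indeed $\Gamma(\frac12+\ldots)\Gamma(\ldots+it)$ gives a factor $e^{\pi T/4}$ relative to $e^{-\pi T/2}$). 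Choosing $A$ large, these residues contribute $O(T^{-2/3})$. By the invariance under $u\mapsto-u$ and the evenness of $G$, the integral on $\Re u=-1$ equals minus the integral on $\Re u=1$. Hence
\[
|\zeta(s)|^2=2\cdot\frac{1}{2\pi i}\int_{(1)}F(u)G(u)\frac{\d u}{u}+O(T^{-2/3}).
\]

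\textbf{Step 2 (Dirichlet series and Stirling).} Expand $\zeta(s+u)\zeta(\bar s+u)=\sum_{m,n}m^{-s-u}n^{-\bar s-u}$; after relabeling $m\leftrightarrow n$ the oscillating factor becomes $(m/n)^{it}$, and the prefactor is $(mn)^{-1/2}$. On $|\Im u|\le T^{\ep/10}$, Stirling's formula gives
\[
\frac{\Gamma\!\left(\tfrac{s+u}{2}\right)\Gamma\!\left(\tfrac{\bar s+u}{2}\right)}{\Gamma\!\left(\tfrac{s}{2}\right)\Gamma\!\left(\tfrac{\bar s}{2}\right)}=\left(\frac{t}{2}\right)^{u}\bigl(1+O(|u|^2/t)\bigr),
\]
so the kernel becomes $(t/2\pi)^u$ and the integral reproduces $W(2\pi mn/t)$ as defined in \eqref{eq:W_def}. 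Larger $|\Im u|$ are negligible by the decay of $G$.

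\textbf{Step 3 (error terms and truncation).} The $O(|u|^2/t)$ correction gives a weighted sum of the same shape with a modified weight. To make it $O(T^{-2/3})$ rather than $O(T^{-1/2}\log T)$, shift that correction integral to $\Re u=\ep'$ small and write the correction as the full asymptotic expansion with a smooth weight, bounding it as in \cite{LR}. Finally, the terms with $mn\ge T^{1+\ep}$ are $\ll_A T^{-A}$ because $W(x)\ll_A x^{-A}$.

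The main obstacle is this bookkeeping: keeping the Stirling error and the $\zeta$-pole residues below $T^{-2/3}$. Both depend on the rapid decay of $G$ dominating polynomial losses in $T$, and we follow \cite[Lemma 4]{LR} for the precise estimates.
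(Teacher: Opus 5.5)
The paper gives no proof of its own here; it quotes \cite[Lemma 4]{LR}. Your Steps 1--2 are the standard derivation behind that result and are set up correctly: the $u\mapsto-u$ symmetry comes from $1-s=\bar s$ and the evenness of $G$, and you have the factor $2$ and the Dirichlet expansion on $\Re u=1$. There are two small slips. First, $\Lambda(s+u)\Lambda(\bar s+u)$ has four poles in the strip, at $u=\pm\frac12\pm it$ (from the poles of $\Lambda$ at $0$ and $1$), not two. Second, at $u=\frac12-it$ the Gamma product is $\Gamma(\frac12)\Gamma(\frac12-it)\asymp e^{-\pi t/2}$. This is the same exponential as $|\Gamma(s/2)|^2\asymp t^{-1/2}e^{-\pi t/2}$, so the normalized factor is $\asymp t^{1/2}$. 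Your parenthetical about $e^{\pi T/4}$ is therefore wrong, although the conclusion (polynomial size, killed by $G$) is right.

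The genuine gap is Step 3, which is exactly where the stated error $O(T^{-2/3})$ is decided. With relative Stirling error $O(|u|^2/t)$ you only know that each weight changes by $O(t^{-1})$ for $mn\le t$. Summing against $(mn)^{-1/2}$ then gives $O(T^{-1/2}\log T)$, as you note. Your proposed remedy does not improve this. Moving the correction integral to $\Re u=\varepsilon'$ does not change the size of a term of relative order $1/t$, and ``the full asymptotic expansion'' still starts with that term unless you show its coefficient vanishes. Deferring to \cite{LR} at this point leaves precisely the claimed bound unproved.

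The missing observation is that the coefficient does vanish. With $z=s/2=\frac14+\frac{it}{2}$ and $a=u/2$,
\[
\log\frac{\Gamma(z+a)\Gamma(\bar z+a)}{\Gamma(z)\Gamma(\bar z)}=2a\log|z|+\frac{a(a-1)}{2}\Bigl(\frac1z+\frac1{\bar z}\Bigr)+O\Bigl(\frac{(1+|a|)^3}{|z|^2}\Bigr),\qquad \frac1z+\frac1{\bar z}=\frac{2\Re z}{|z|^2}=\frac{1}{2|z|^2}.
\]
Because $\Re z=\frac14$ is fixed, the first-order terms cancel; equivalently, $\log|\Gamma(\sigma+i\tau)|$ has no $1/\tau$ term. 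Since $|z|^{2a}=(t/2)^u(1+O(|u|t^{-2}))$, the normalized Gamma ratio times $\pi^{-u}$ equals $(t/2\pi)^u\bigl(1+O((1+|u|)^3t^{-2})\bigr)$ for $|u|\le t^{1/3}$.

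The correction vanishes at $u=0$, so you may place its integral on $\Re u=-\frac14$ when $mn\le t$ (no poles are crossed) and on $\Re u=A$ when $mn>t$. Each weight then changes by $\ll T^{-2}\min\{(mn/t)^{1/4},(mn/t)^{-A}\}$, and the total error is $O(T^{-3/2}\log T)$, well inside $O(T^{-2/3})$. With this replacement, the rest of your argument proves the lemma.

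For the paper's own use, the weaker $O(T^{-1/2}\log T)$ would actually suffice, since the error is only multiplied by $|M_\theta|^2\ll T^{\theta}$ with $\theta<\frac12$. It does not, however, give the lemma as stated.
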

We are now ready to estimate $J$.
\subsection*{Step 1: Decomposition of the second moment via the Kurasov--Sarnak summation formula}
\begin{align*}
J&=\sum_{P(\ga)=0} \left|\zeta\cdot M_\theta\left(\frac{1}{2}+i\ga\right)\right|^2\p\left(\frac{\ga}{T}\right)\\
&=2\sum_{P(\ga)=0} \sum_{mn<T^{1+\ep}} \sum_{h,k\leq T^{\theta}}\frac{b(h)b(k)}{\sqrt{mnhk}}W\left(\frac{2\pi mn}{\ga}\right) \left(\frac{mk}{nh}\right)^{i\ga}\p\left(\frac{\ga}{T}\right)\\
&+O(T^{-2/3})\sum_{P(\ga)=0} \sum_{h,k\leq T^{\theta}} \frac{b(h)b(k)}{\sqrt{hk}}\left(\frac{k}{h}\right)^{i\ga}\p\left(\frac{\ga}{T}\right)=J_1+J_2.
\end{align*}
We have $J_2\ll T^{5/6}$ by direct computation. For $J_1$, we apply the Kurasov--Sarnak summation formula, Theorem~\ref{thm:KS}, with
\begin{align}
\label{eq:fdef}
    f(x)=f_{m,n,T}(x):=W(2\pi mn/x)\p(x/T),\qquad x>0,
\end{align}
and set $f(x)=0$ if $x\leq0$. Note that $f\in C^\infty_c(\r)$ because $W\in C^{\infty}(0,\infty)$ and $\p(\,\cdot\,/T)$ vanishes outside the interval $[T,2T]$. We obtain
\begin{align*}
J_1&=2\sum_{mn<T^{1+\ep}} \frac{1}{\sqrt{mn}}\sum_{h,k\leq T^{\theta}} \frac{b(h)b(k)}{\sqrt{hk}}\left(\sum_{P(\ga)=0}\left(\frac{mk}{nh}\right)^{i\ga}f(\ga)\right)\\
&=2\sum_{mn<T^{1+\ep}} \frac{1}{\sqrt{mn}}\sum_{h,k\leq T^{\theta}} \frac{b(h)b(k)}{\sqrt{hk}}\left(c_0\ft{f}\left(-\frac{\log{\frac{mk}{nh}}}{2\pi}\right)+\sum_{\pm\ba\in\z_+^M} c_{\ba}\ft{f}\left(\bo\ell\cd\ba-\frac{\log{\frac{mk}{nh}}}{2\pi}\right)\right).
\end{align*}

\subsection*{Step 2: The main term \texorpdfstring{$\ba=0$}{a = 0}}
After isolating the main term with $\ba=0$, we write it as an integral:
\begin{align*}
   &\sum_{mn<T^{1+\ep}} \frac{2}{\sqrt{mn}}\sum_{h,k\leq T^{\theta}} \frac{b(h)b(k)}{\sqrt{hk}}c_0\ft{f}\left(-\frac{\log{\frac{mk}{nh}}}{2\pi}\right)\\
   &=2c_0\int_{-\infty}^{\infty}f(t)\sum_{mn<T^{1+\ep}}\frac{1}{\sqrt{mn}}\sum_{h,k\leq T^{\theta}} \frac{b(h)b(k)}{\sqrt{hk}}\left(\frac{mk}{nh}\right)^{it}\d t \\ 
   &=c_0\int_{\r} \left|\zeta\cd M_\theta\left(\frac{1}{2}+it\right)\right|^2\p\left(\frac{t}{T}\right)\d t+o(T)
\end{align*}
after recalling~\eqref{eq:fdef} and using Lemma~\ref{lemma:approx_riem_zeta}. Hence, we may write
\begin{align*}
J_1=c_0\int_{\r} \left|\zeta\cd M_\theta\left(\frac{1}{2}+it\right)\right|^2\p\left(\frac{t}{T}\right)\d t+J_3+o(T),
\end{align*}
where 
\begin{align*}
J_3
&=2\sum_{mn<T^{1+\ep}} \frac{1}{\sqrt{mn}}\sum_{h,k\leq T^{\theta}} \frac{b(h)b(k)}{\sqrt{hk}}\sum_{\pm\ba\in\z_+^M} c_{\ba}\ft{f}\left(\bo\ell\cd\ba-\frac{\log{\frac{mk}{nh}}}{2\pi}\right).
\end{align*}

\subsection*{Step 3: The terms with \texorpdfstring{$\ba\neq0$}{a is nonzero} are the error terms}
We next estimate $J_3$. Since $\ov{\ft{f}\left(\bo\ell\cd\ba-\frac{\log{\frac{mk}{nh}}}{2\pi}\right)}=\ft{f}\left(-\bo\ell\cd\ba-\frac{\log{\frac{nh}{mk}}}{2\pi}\right)$ and $c_{-\ba}=\overline{c_{\ba}}$, we have
\begin{align*}
J_3=4\Re\sum_{\ba\in\z_+^M} \sum_{\substack{mn<T^{1+\ep}\\ h,k\leq T^{\theta}}} \frac{b(h)b(k)}{\sqrt{mnhk}} c_{\ba}\ft{f}\left(\bo\ell\cd\ba-\frac{\log{\frac{mk}{nh}}}{2\pi}\right)=R_G+R_B,
\end{align*}
where we once again split the sum into ``good'' and ``bad'' parts as follows:
\begin{align*}
&G:=\left\{(\ba,m,n,h,k);\left|1-\frac{\log{\frac{mk}{nh}}}{2\pi\bo\ell\cd\ba}\right|>\frac{1}{T^{1-\eta}}\right\}\\
\text{and} \ \ &B:=\left\{(\ba,m,n,h,k);\left|1-\frac{\log{\frac{mk}{nh}}}{2\pi\bo\ell\cd\ba}\right|\leq\frac{1}{T^{1-\eta}}\right\},
\end{align*}
where $R_G$ and $R_B$ denote the sums $J_3$ restricted to the sets $G$ and $B$, respectively. Since $\ft{f}(x)\ll_A \frac{T}{(1+T|x|)^A}$, with the implicit constant in $\ll_A$ independent of $m,n$, it follows that $R_G=o_{\eta}(T)$ if we choose $A=\lceil 3/\eta \rceil$. It remains to bound $R_B$. Observe that $(\ba,m,n,h,k)\in B$ implies that $2\pi\bo\ell\cd\ba\leq 4\log{T}$ for sufficiently large $T$. Hence,
\begin{align*}
B\subset B_1:=\left\{(\ba,m,n,h,k);\left|2\pi\bo\ell\cd\ba-\log{\frac{mk}{nh}}\right|\leq\frac{1}{T^{1-2\eta}}=\frac{1}{T^{1-\tilde{\eta}}}\right\}.
\end{align*}
Using the inequality $|e^a-e^b|\leq e^{a+|b-a|}|b-a|$, valid for any $a,b\in\r$, we deduce that
\begin{align*}
B_1\subset B_2:=\left\{(\ba,m,n,h,k);\left|\frac{mk}{nh}-e^{2\pi\bo\ell\cd\ba}\right|\leq \frac{e^{2\pi\bo\ell\cd\ba}}{T^{1-\tilde{\eta}}}\right\}
\end{align*}
after redefining $\tilde{\eta}$ if necessary. Therefore, we may replace the summation defining $R_B$ by one over the larger set $B_2$. More precisely, after redefining $G$ and $B$, we may take $G_2:=G\backslash B_2$ and $B_2$ as the ``good'' and ``bad'' parts, respectively. We then still have $J_3=R_{G_2}+R_{B_2}$, with $R_{G_2}=o_{\eta}(T)$. For simplicity, we drop the subscript $2$ and write only $R_G$ and $R_B$. We are therefore left with the term
\begin{align*}
R_B=4\Re{\sum_{\ba\in\z_+^M} c_{\ba} \sum_{\substack{mn<T^{1+\ep}\\ h,k\leq T^{\theta}\\ (m,n,h,k)\in B_{\ba}}} \frac{b(h)b(k)}{\sqrt{mnhk}}\ft{f}\left(\bo\ell\cd\ba-\frac{\log{\frac{mk}{nh}}}{2\pi}\right)},
\end{align*}
where $B_{\ba}:=\{(m,n,h,k);(\ba,m,n,h,k)\in B_2\}$. Fix $\ba\in\z_+^M$ and partition the sum according to $(mk,nh)$. Let
$$C_{\ba}:=\left\{(r,s);r,s\geq1,\operatorname{gcd}(r,s)=1,\, \left|\frac{r}{s}-e^{2\pi\bo\ell\cd\ba}\right|\leq \frac{e^{2\pi\bo\ell\cd\ba}}{T^{1-\tilde{\eta}}}\right\}\subset \z\times\z.$$
With this notation,
\begin{align*}
R_B=4\Re{\sum_{\ba\in\z_+^M} c_{\ba}\sum_{(r,s)\in C_{\ba}} \sum_{d\geq1} \frac{1}{d} \sum_{\substack{mn<T^{1+\ep}\\ h,k\leq T^{\theta} \\ mk=rd \\ nh=sd}} \frac{b(h)b(k)}{\sqrt{rs}}\ft{f}\left(\bo\ell\cd\ba-\frac{\log{\frac{r}{s}}}{2\pi}\right)}.
\end{align*}
Define $K_{\ba}^2:=T^{1-\tilde{\eta}}e^{-2\pi\bo\ell\cd\ba}/4$. We partition the sum over $s$ at $K_{\ba}$ to obtain $R_B=R_{B,+}+R_{B,-}$, where
\begin{align*}
R_{B,+}=4\Re{\sum_{\ba\in\z_+^M} c_{\ba}\sum_{\substack{(r,s)\in C_{\ba} \\ s\ge K_{\ba} }} \sum_{d\geq1} \frac{1}{d} \sum_{\substack{mn<T^{1+\ep}\\ h,k\leq T^{\theta}\\ mk=rd \\ nh=sd}} \frac{b(h)b(k)}{\sqrt{rs}}\ft{f}\left(\bo\ell\cd\ba-\frac{\log{\frac{r}{s}}}{2\pi}\right)}   
\end{align*}
and 
\begin{align*}
R_{B,-}=4\Re{\sum_{\ba\in\z_+^M} c_{\ba}\sum_{\substack{(r,s)\in C_{\ba} \\ s< K_{\ba} }} \sum_{d\geq1} \frac{1}{d} \sum_{\substack{mn<T^{1+\ep}\\ h,k\leq T^{\theta}\\ mk=rd \\ nh=sd}} \frac{b(h)b(k)}{\sqrt{rs}}\ft{f}\left(\bo\ell\cd\ba-\frac{\log{\frac{r}{s}}}{2\pi}\right)}.    
\end{align*}
We first show that $R_{B,+}$ is small. Indeed, if $(r,s)\in C_{\ba}$, then $rs\geq s^2e^{2\pi\bo\ell\cd\ba}(1-T^{-1+\tilde{\eta}})\geq s^2e^{2\pi\bo\ell\cd\ba}/4$ for every sufficiently large $T$, depending on $\tilde{\eta}$. We also have $rs\leq mnhk< T^{1+\ep+2\theta}$, and hence $s\leq M_\ba$, where $M_\ba:=2 T^{1/2+\ep+\theta} e^{-\pi\bo\ell\cd\ba}$; moreover, $d\leq T^2$. If $(r,s)\in C_\ba$ has a contributing tuple of integers $m,n,h,k$ (so that the innermost constrained sum is nonempty), then the definition of $C_\ba$ also implies that $2\pi\bo\ell\cd\ba\leq 4 \log{T}$.

Next, we split the sum into dyadic blocks $2^{n-1}K_\ba\leq s<2^n K_\ba$ for $1\leq n\le N$, where $N:=\left\lceil\log{\frac{2M_{\ba}}{K_{\ba}}}/\log{2}\right\rceil$, so that $2^N K_\ba\geq 2M_{\ba}$. Since $\ft{f}(x)\ll T$, with an implicit constant independent of $m,n$, we obtain
\begin{align*}
R_{B,+}&\ll T\sum_{\substack{\ba\in\z_+^M\\ 2\pi\bo\ell\cd\ba\leq 4\log{T}}} |c_{\ba}| \sum_{n=1}^{N} \sum_{\substack{(r,s)\in C_{\ba}\\ 2^{n-1}K_\ba\leq s< 2^nK_\ba}} \frac{1}{\sqrt{rs}} \sum_{d\leq T^2} \frac{1}{d} \sum_{\substack{mn<T^{1+\ep}\\ h,k\leq T^\theta\\ mk=rd\\ nh=sd}} 1\\
&\ll_\ep T\sum_{\substack{\ba\in\z_+^M\\ 2\pi\bo\ell\cd\ba\leq 4\log{T}}} |c_{\ba}| \sum_{n=1}^{N} \sum_{\substack{(r,s)\in C_{\ba}\\ 2^{n-1}K_\ba\leq s< 2^nK_\ba}} (rs)^{-1/2+\ep} \sum_{d\leq T^2} d^{2\ep-1}
\end{align*}
because
\begin{align*}
&\sum_{\substack{mn<T^{1+\ep}\\ h,k\leq T^\theta\\ mk=rd\\ nh=sd}} 1\leq \tau(rd)\tau(sd)\ll_\ep (rs)^\ep d^{2\ep},
\end{align*}
where we recall that $\tau(\,\cdot\,)$ is the divisor function. Hence,
\begin{equation}
\label{eq:sum_S_2}
R_{B,+}\ll_\ep T^{1+4\ep} \sum_{\substack{\ba\in\z_+^M\\ 2\pi\bo\ell\cd\ba\leq 4\log{T}}} |c_{\ba}| \sum_{n=1}^{N} \sum_{\substack{2^{n-1}K_\ba\leq s< 2^nK_\ba\\ (r,s)\in C_{\ba}}} (rs)^{-1/2+\ep}.
\end{equation}
We now consider the inner summation over $s$. If $(r,s)\in C_\ba$, then $r\geq s e^{2\pi\bo\ell\cd\ba} (1-T^{-1+\tilde{\eta}})$. Combining this with $s\geq 2^{n-1} K_\ba$, we obtain $(rs)^{-1/2+\ep}\leq (2^{n-2} K_\ba e^{\pi\bo\ell\cd\ba})^{-1+2\ep}$. Moreover, the number of pairs $(r,s)\in C_{\ba}$ such that $2^{n-1} K_\ba\leq s< 2^n K_\ba$ is at most
\begin{align*}
2(2^n K_\ba)^2\frac{e^{2\pi\bo\ell\cd\ba}}{T^{1-\tilde{\eta}}}+1,
\end{align*}
because Farey fractions with denominator at most $2^n K_\ba$ are spaced at least $(2^n K_\ba)^{-2}$ and the condition $(r,s)\in C_{\ba}$ implies $r/s$ lies in an interval of length $2e^{2\pi \bo\ell\cd \ba}/T^{1-\tilde{\eta}}$. Therefore,
\begin{align*}
\sum_{\substack{(r,s)\in C_{\ba}\\ 2^{n-1}K_\ba\leq s< 2^nK_\ba}} (rs)^{-1/2+\ep}\leq (2^{n-2}K_\ba e^{\pi\bo\ell\cd\ba})^{-1+2\ep}\left(2(2^n K_\ba)^2\frac{e^{2\pi\bo\ell\cd\ba}}{T^{1-\tilde{\eta}}}+1\right).
\end{align*}
Substituting this estimate into~\eqref{eq:sum_S_2} gives
\begin{align}
\label{eq:RB+}
R_{B,+}\ll_\ep T^{1+4\ep} \sum_{\substack{\ba\in\z_+^M\\ 2\pi\bo\ell\cd\ba\leq 4\log{T}}} |c_{\ba}| \sum_{n=1}^{N} (2^{n-2}K_\ba e^{\pi\bo\ell\cd\ba})^{-1+2\ep}\left(2(2^n K_\ba)^2\frac{e^{2\pi\bo\ell\cd\ba}}{T^{1-\tilde{\eta}}}+1\right).
\end{align}
Using the estimate $\sum_{n=1}^{N} 2^n\ll 2^{N+1}\leq 8 M_\ba/K_\ba$ and taking $0<\tilde{\eta}\leq\ep$, the contribution of the $+1$ in \eqref{eq:RB+} is
\begin{align*}
    &\ll_\ep T^{1+4\ep} \sum_{\substack{\ba\in\z_+^M\\ 2\pi\bo\ell\cd\ba\leq 4\log{T}}} |c_{\ba}| \sum_{n=1}^{N} (2^{n-2}K_\ba e^{\pi\bo\ell\cd\ba})^{-1+2\ep}\ll T^{1/2+\theta+7\ep} \sum_{\substack{\ba\in\z_+^M\\ 2\pi\bo\ell\cd\ba\leq 4\log{T}}} |\ba|\\
    &\ll T^{1/2+\theta+7\ep} (\log{T})^{M+1} \ll_\ep T^{1/2+\theta+(M+8)\ep}=o_\ep(T)
\end{align*}
provided that $(M+8)\ep<1/2-\theta$. The other term in \eqref{eq:RB+} contributes
\begin{align*}
    &\ll_\ep T^{1+4\ep} \sum_{\substack{\ba\in\z_+^M\\ 2\pi\bo\ell\cd\ba\leq 4\log{T}}} |c_{\ba}| \sum_{n=1}^{N} (2^{n-2}K_\ba e^{\pi\bo\ell\cd\ba})^{-1+2\ep}2(2^n K_\ba)^2\frac{e^{2\pi\bo\ell\cd\ba}}{T^{1-\tilde{\eta}}}\\
    &\ll T^{1/2+\theta+10\ep} \sum_{\substack{\ba\in\z_+^M\\ 2\pi\bo\ell\cd\ba\leq 4\log{T}}} |c_{\ba}|\ll_\ep T^{1/2+\theta+(M+11)\ep}=o_\ep(T) 
\end{align*}
provided that $(M+11)\ep<1/2-\theta$. Therefore, taking $2(M+11)\ep=1/2-\theta$ yields $R_{B,+}=o_\theta(T)$.

It remains to estimate $R_{B,-}$. We claim that, for each $\ba$, there is at most one pair $(r_{\ba},s_{\ba})\in C_{\ba}$ with $s_{\ba}< K_{\ba}$. Indeed, if there were two such pairs, say $(r_{\ba},s_{\ba})$ and $(r'_{\ba},s'_{\ba})$, then we would have
$$4\frac{e^{2\pi\bo\ell\cd\ba}}{T^{1-\tilde{\eta}}}=\frac{1}{K_{\ba}^2} \leq \frac{1}{s_{\ba}s'_{\ba}}\le \left|\frac{r_{\ba}}{s_{\ba}}-\frac{r'_{\ba}}{s'_{\ba}}\right|\le 2\frac{e^{2\pi\bo\ell\cd\ba}}{T^{1-\tilde{\eta}}},$$
which is a contradiction. With $(r_{\ba},s_{\ba})$ as above, we have
\begin{align*}
R_{B,-}=4\Re{\sum_{\substack{\ba\in\z_+^M \\ s_{\ba}< K_{\ba}}} c_{\ba} \sum_{d\geq1} \frac{1}{d} \sum_{\substack{mn<T^{1+\ep}\\ h,k\leq T^{\theta}\\ mk=r_{\ba}d \\ nh=s_{\ba}d}} \frac{b(h)b(k)}{\sqrt{r_{\ba}s_{\ba}}}\ft{f}\left(\bo\ell\cd\ba-\frac{\log{\frac{r_{\ba}}{s_{\ba}}}}{2\pi}\right)}.
\end{align*}
Recalling~\eqref{eq:fdef}, we obtain
\begin{align*}
&R_{B,-}=4\Re \sum_{\substack{\ba\in\z_+^M\\ s_{\ba}< K_{\ba}}}\frac{c_{\ba}}{\sqrt{r_{\ba}s_{\ba}}}\int_{T}^{2T} \p\left(\frac{t}{T}\right)\exp\left({-2\pi it\left(\bo\ell\cd\ba-\frac{\log{r_{\ba}/s_{\ba}}}{2\pi}\right)}\right)F(r_{\ba},s_{\ba},t)\d t,\\
&\text{where } F(r,s,t):=\sum_{d\geq1} \frac{1}{d} \sum_{h,k\leq T^\theta} b(h)b(k) \sum_{\substack{mn<T^{1+\ep}\\ mk=rd\\ nh=sd}} W\left(\frac{2\pi mn}{t}\right).
\end{align*}
Since $T\leq t\leq 2T$ and $W(x)\ll_A x^{-A-1}$, the tail satisfies
\begin{align*}
    F_{tail}(r,s,t)&:=\sum_{d\geq1} \frac{1}{d} \sum_{h,k\leq T^\theta} b(h)b(k) \sum_{\substack{mn\geq T^{1+\ep}\\ mk=rd\\ nh=sd}} W\left(\frac{2\pi mn}{t}\right)\\
    &\ll_A \sum_{d\geq1} \frac{1}{d} \sum_{h,k\leq T^\theta} \sum_{\substack{mn\geq T^{1+\ep}\\ mk=rd\\ nh=sd}} \left|\frac{2\pi mn}{t}\right|^{-A-1}\ll T^{-A\ep} \sum_{d\geq1} \frac{1}{d} \sum_{h,k\leq T^\theta} \sum_{\substack{mn\geq T^{1+\ep}\\ mk=rd\\ nh=sd}} \frac{thk}{2\pi rdsd}\\
    &\ll T^{-A\ep+1+2\theta} \sum_{d\geq1} \frac{1}{d^3} \sum_{\substack{h,k\leq T^\theta\\ h\mid rd\\ k\mid sd}}1\ll T^{-A\ep+1+2\theta} \sum_{d\geq1} \frac{\tau(sd)\tau(rd)}{d^3}\ll T^{-B}
\end{align*}
for arbitrarily large $B>0$. Hence, we may redefine
\begin{align*}
   F(r,s,t):=\sum_{d\geq1} \frac{1}{d} \sum_{h,k\leq T^\theta} b(h)b(k) \sum_{\substack{m,n\geq1\\ mk=rd\\ nh=sd}} W\left(\frac{2\pi mn}{t}\right). 
\end{align*}
To control $F(r,s,t)$, we appeal to \cite[Lemma 1]{LR}. The statement of \cite[Lemma 1]{LR} contains a typo; the version stated here is the one implied by its proof (which is somewhat brief in \cite{LR}). In the next section we present a self-contained and different argument for sake of completeness. 
\begin{proposition}[Li--Radziwi\l\l's Lemma~1 with $\al=1$ and $\be=0$]
\label{prop:bounding_F}
For any positive integers $r,s$ with $(r,s)=1$ and $rs>1$, the following estimate holds uniformly for $T\leq t\leq 2T$:
\begin{equation*}
F(r,s,t)\ll_{\ep,\theta,G} \frac{(rs)^\ep}{\log{T}}
\end{equation*}
for any $\ep>0$.
\end{proposition}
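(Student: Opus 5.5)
We would prove Proposition~\ref{prop:bounding_F} by turning $F(r,s,t)$ into a contour integral of an explicit Dirichlet series and then exploiting the cancellation of the mollifier against $\zeta$. First we parametrize the constraints $mk=rd$, $nh=sd$ with $(r,s)=1$. Since $mn\cdot hk = rs d^2$, we insert the Mellin representation \eqref{eq:W_def} of $W$:
\[
F(r,s,t)=\frac{1}{2\pi i}\int_{(\ep)} G(w)\left(\frac{t}{2\pi}\right)^{w}\sum_{h,k\le T^\theta} b(h)b(k)\sum_{d\ge1}\frac{1}{d}\,\frac{\1_{k\mid rd}\,\1_{h\mid sd}}{(rsd^2/(hk))^{w}}\,\frac{\d w}{w}.
\]
For fixed $h,k$, the sum over $d$ runs over $d$ divisible by $k/(k,r)$ and $h/(h,s)$, i.e. $d\equiv 0 \bmod [k/(k,r),h/(h,s)]$. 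Hence it equals
\[
\zeta(1+2w)\,[k/(k,r),h/(h,s)]^{-1-2w}\,(rs)^{-w}(hk)^{w}.
\]
This gives the representation
\[
F=\frac{1}{2\pi i}\int_{(\ep)} G(w)\Bigl(\frac{t}{2\pi rs}\Bigr)^{w}\zeta(1+2w)\,\Sigma(w)\,\frac{\d w}{w},
\qquad
\Sigma(w)=\sum_{h,k}\frac{b(h)b(k)(hk)^{w}}{[k/(k,r),h/(h,s)]^{1+2w}}.
\]

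The next step is to open $b(m)=\mu(m)\frac{1}{2\pi i}\int_{(1)}\frac{(T^\theta/m)^{z}}{z^2\,\theta\log T}\,\d z$ in both $h$ and $k$. This makes $\Sigma$ a double contour integral of an Euler product $\sum_{h,k}\mu(h)\mu(k)h^{w-z_1}k^{w-z_2}[\cdots]^{-1-2w}$. Computing the local factors, we get
\[
\Sigma\approx \frac{\zeta(1+2w)\,\zeta(1+z_1+z_2)}{\zeta(1+w+z_1)\,\zeta(1+w+z_2)}\,A(w,z_1,z_2)\,E_{r,s}(w,z_1,z_2),
\]
where $A$ is absolutely convergent near the origin and $E_{r,s}$ is a finite product over $p\mid rs$ of local corrections. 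The key point is that the primes dividing $r$ (respectively $s$) alter only the $k$ (respectively $h$) factor. As a result, $E_{r,s}$ is bounded by $\prod_{p\mid rs}(1+O(p^{-1/2}))\ll (rs)^\ep$ on the contours, and, crucially, it vanishes to first order at $z_2=-w$ (respectively $z_1=-w$) whenever $r>1$ (respectively $s>1$). Since $rs>1$, at least one such zero is present.

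We then shift contours in the standard way: move $w$ to $\Re w=-c/\log T$, and $z_1,z_2$ to small real parts of size $\asymp 1/\log T$, picking up residues at $w=0$. The factor $(t/2\pi rs)^w$ together with $(T^\theta)^{z_1+z_2}$ produces the main-term powers of $\log T$. In the generic (Li--Radziwi\l\l\ diagonal) case the residue is of order $\frac{1}{\theta\log T}\cdot(\text{polynomial in }\log T)$. For $rs>1$, however, the extra vanishing factor of $E_{r,s}$ at $z_j=-w$ kills one power of $\log T$ relative to the diagonal term. The diagonal term is $\asymp 1$, so this leaves $\ll (rs)^\ep/\log T$. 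The shifted integrals are bounded using the rapid decay of $G$, the $1/(z^2\log T)$ decay of the mollifier kernels, and standard zero-free-region bounds for $1/\zeta(1+\cdot)$ near the $1$-line, and they contribute $O((rs)^\ep/\log T)$ as well.

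The main obstacle is the local computation at primes $p\mid rs$. There we must verify precisely that $E_{r,s}$ vanishes at $z_1=-w$ or $z_2=-w$ as claimed, which is the source of the saving $1/\log T$ over the trivial bound $O((rs)^\ep)$. We must also check that $E_{r,s}$ is uniformly $\ll (rs)^\ep$ on the shifted contours. If the clean vanishing fails, we would instead argue directly: write $F$ as $\sum_d$ of a divisor-type sum and use $\sum_{h\mid n}b(h)=\sum_{h\mid n}\mu(h)+O(\log n/\log T)$, as was done for $A_{\theta,T}(s)$ in Section~\ref{sec:I}. The nontrivial factor $s>1$ or $r>1$ forces the Möbius sum to cancel, and the remaining error is $O(\tau\text{-type}/\log T)$, which is summable against $W$.
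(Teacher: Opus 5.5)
Your mechanism for the saving is the right one, and it is the same cancellation the paper uses. The $d$-sum is the $w$-Mellin integral defining $H$, and the vanishing of the local factor at $p\mid r$ (in the $k$-variable) or $p\mid s$ (in the $h$-variable) at the central point is exactly the paper's identity $A_*B_*=0$. A few details are off, but harmlessly:
\begin{itemize}
\item For $p\mid r$ the local factor is $(1-p^{w-z_2})(1-p^{-1-w-z_1})$. It vanishes on $z_2=w$, not $z_2=-w$. What you actually use, $E_{r,s}(0,z_1,0)=0$, is still true.
\item $|1-p^{w-z_2}|$ can be close to $2$, so the correct bound is $E_{r,s}\ll 2^{\omega(rs)}$ rather than $\prod_{p\mid rs}(1+O(p^{-1/2}))$. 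This is still $\ll(rs)^\ep$.
\item $\zeta(1+2w)$ is counted twice if it appears inside $\Sigma$ as well as outside.
\end{itemize}

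The genuine gap is the handling of the $w$-integral. On $\Re w=-c/\log T$ one has $|(t/2\pi rs)^w|\asymp 1$, so none of the tools you list makes the shifted integral small. In the range $|w|\asymp 1$, $|z_j|\lesssim 1/\log T$:
\begin{itemize}
\item the factors $1/\zeta(1+w+z_j)$ are $\asymp 1$;
\item $E_{r,s}$ gives no saving;
\item $\zeta(1+z_1+z_2)N^{z_1+z_2}/(z_1z_2\log T)^2$, integrated over that range, has size $\log T$.
\end{itemize}
So an absolute bound gives at best $O(\log T)$, not $O((rs)^\ep/\log T)$. The true smallness comes from the oscillation of $(t/2\pi rs)^w$. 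You can only exploit it by pushing $w$ far to the left, and once the Euler product is formed this runs into the zeros of $\zeta(1+w+z_j)$.

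The missing step is to do the $w$-integral first, for each fixed $(h,k)$, where all sums are finite. Since $x=tg^2/(2\pi hkrs)\ge T^{1-2\theta-\eta}$, you may shift to $\Re w=-A$ and obtain $H(x)=\tfrac12\log x+\gamma_0+O(x^{-A})$ (Lemma~\ref{lem:H}). This requires $rs\le T^\eta$, so the range $rs>T^\eta$ must be treated separately, and your plan never does this; there $t/(2\pi rs)$ need not be large and the $w=0$ residue need not dominate. The paper uses the trivial bound $F\ll(rs)^{\ep/2}$ there, which suffices because then $(rs)^{\ep/2}\gg\log T$.

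From that point on, your $z$-residue calculus is a viable alternative to the paper's route. The paper instead writes $h=ah_0$, $k=bk_0$ with $a,b\mid\operatorname{rad}(rs)$, and evaluates the sums of $\rho_X(m)\rho_Y(n)/[m,n]$ over $(mn,Q)=1$ by Barban--Vehov/Selberg asymptotics (Lemma~\ref{lem:selberg}), with no zero-free region needed. Your version would need the standard zero-free-region contours in $z_1,z_2$ and control of $E_{r,s}$ for $\Re z_j<0$; the latter is fine when $rs\le T^\eta$.

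Your fallback does not work. The pointwise bound $\sum_{h\mid n}b(h)\ll\tau(n)/\log T$, summed against $1/d$ over $d$ up to a power of $T$, loses powers of $\log T$. Moreover, once $sd>T^\theta$ the truncation $h\le T^\theta$ destroys the M\"obius cancellation altogether. The averaging over $d$ is exactly what the sieve asymptotics capture.
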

Since $2\pi \bo\ell\cd\ba\geq 2\pi\min_{1\leq j\leq M}\ell_j>0$ uniformly for $\ba\in \z_+^M$, we have $e^{2\pi \bo\ell\cd\ba}>1$ for every $\ba$. Moreover, since $\left|\frac{r_\ba}{s_\ba}-e^{2\pi\bo\ell\cd\ba}\right|\leq e^{2\pi\bo\ell\cd\ba}/T^{1-\tilde{\eta}}$, we must have $r_\ba s_\ba>1$ for sufficiently large $T$. Applying Proposition~\ref{prop:bounding_F}, we deduce that
\begin{align*}
R_{B,-} &\ll_{\ep,\theta} \frac{T}{\log{T}}\sum_{\substack{\ba\in\z_+^M\\ s_{\ba}\leq K_{\ba}}}\frac{|c_{\ba}|}{(r_{\ba}s_{\ba})^{1/2-\varepsilon}}.
\end{align*}
Since $\ep>0$ was chosen in terms of $\theta$, all implicit constants depend on $\theta$ and on the fixed data $P$ and $G$. Using $r_{\ba}\asymp s_{\ba}e^{2\pi\bo\ell\cd\ba}$ gives
\begin{align*}
R_{B,-} &\ll_{\theta,P,G} \frac{T}{\log{T}}\sum_{\substack{\ba\in\z_+^M}} |c_{\ba}|e^{2\pi\bo\ell\cd\ba\left(-1/2+\ep\right)}
\ll_{\theta,P,G}\frac{T}{\log T}=o(T),
\end{align*}
and the result follows.

\section{Proof of Proposition \ref{prop:bounding_F}}
In this section, we give an independent proof of Proposition \ref{prop:bounding_F}. We begin with two auxiliary lemmas.
\begin{lemma}
\label{lem:H}
Let
\begin{equation*}
    H(x):=\sum_{q\geq1}\frac1qW\!\left(\frac{q^2}{x}\right).
\end{equation*}
Then, for every $A>0$ and $x\geq1$,
\[
H(x)=\frac12\log x+\gamma_0+O_{A,G}(x^{-A}),
\]
where \(\gamma_0\) is Euler's constant.
\end{lemma}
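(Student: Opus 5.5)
We will use the Mellin representation \eqref{eq:W_def} and shift contours. For $x\geq 1$ and $\ep>0$, absolute convergence on $\Re w=\ep$ allows us to interchange the $q$-sum and the integral:
\[
H(x)=\frac{1}{2\pi i}\int_{(\ep)} x^{w}\,\zeta(1+2w)\,G(w)\,\frac{\d w}{w}.
\]
This holds because $\sum_q q^{-1-2w}=\zeta(1+2w)$ converges absolutely for $\Re w>0$.

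We then move the line of integration to $\Re w=-A$. The integrand $x^{w}\zeta(1+2w)G(w)/w$ has a single pole in the strip $-A\le \Re w\le \ep$, namely a double pole at $w=0$. There $\zeta(1+2w)$ has a simple pole and $1/w$ contributes another. The horizontal segments at height $\pm Y$ vanish as $Y\to\infty$. This follows from the rapid decay $|G(x+iy)|\ll_{A,B}(1+|y|)^{-B}$ in vertical strips, together with the standard polynomial bound $\zeta(\sigma+iy)\ll (1+|y|)^{C(\sigma)}$ in any fixed strip.

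The residue at $w=0$ is computed from the Laurent expansions
\[
\zeta(1+2w)=\frac{1}{2w}+\gamma_0+O(w),\qquad x^{w}=1+w\log x+O(w^2),\qquad G(w)=1+O(w^2).
\]
Here $G(w)=1+O(w^2)$ because $G$ is even and $G(0)=1$, so the linear term vanishes. The coefficient of $w^{-1}$ in $x^w\zeta(1+2w)G(w)/w$ is the coefficient of $w$ in $\bigl(\tfrac{1}{2w}+\gamma_0\bigr)(1+w\log x)$. This equals $\tfrac12\log x+\gamma_0$, and the evenness of $G$ is what prevents a $G'(0)$ term from appearing.

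The remaining integral on $\Re w=-A$ is bounded by
\[
x^{-A}\int_{\r}|\zeta(1-2A+2iy)|\,|G(-A+iy)|\,\frac{\d y}{|A-iy|}.
\]
This is $\ll_{A,G}x^{-A}$, since the polynomial growth of $\zeta$ on $\Re s=1-2A$, which comes from the functional equation, is absorbed by the decay of $G$. If $A$ happens to make $1-2A$ hit a trivial zero, nothing changes; the only poles come from $w=0$. The main point needing care is justifying the contour shift, i.e.\ balancing the growth of $\zeta$ against the decay of $G$. This is routine given the hypotheses on $G$.
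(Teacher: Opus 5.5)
Your proposal is correct and follows the paper's proof essentially step for step. You interchange to get $\frac{1}{2\pi i}\int_{(\varepsilon)} x^{w}\zeta(1+2w)G(w)\,\frac{\mathrm{d}w}{w}$, shift to $\Re w=-A$, pick up the double pole at $w=0$ (using $G'(0)=0$ by evenness), and bound the shifted integral by playing the decay of $G$ against the polynomial growth of $\zeta$. One small slip: the residue is the \emph{constant term} of $x^{w}\zeta(1+2w)G(w)$, not the coefficient of $w$ in $\bigl(\tfrac{1}{2w}+\gamma_0\bigr)(1+w\log x)$, though the value $\tfrac12\log x+\gamma_0$ you state is right.
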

\begin{proof}
Recalling the definition of $W$ from \eqref{eq:W_def} and interchanging the sum and integral on the line $\Re w=c>0$ gives
\begin{align*}
    H(x)=\frac1{2\pi i}\int_{(c)}
 x^w\zeta(1+2w)G(w)\,\frac{dw}{w}.
\end{align*}
We then move the contour to $\Re w=-A$ and note that the only contributing pole is at $w=0$. Since
\begin{align*}
    \zeta(1+2w)=\frac1{2w}+\gamma_0+O(w),
 \quad
 x^w=1+w\log x+O(w^2),
\end{align*}
and $G(0)=1,\ G'(0)=0$, the residue is $\frac12\log x+\gamma_0$. The rapid vertical decay of $G$, together with the standard polynomial growth of $\zeta$ on vertical strips, bounds the new integral by $O_{A,G}(x^{-A})$.
\end{proof}

We now observe that coefficients $b_m$ of the mollifier $M_{\theta}(s)$ can be written in terms of Barban--Vehov sieve weights. To this end, for $X\geq1$, let $\rho_X(n):=\mu(n)\log(X/n)\1_{n\leq X}$ and note that  $b(n)=\rho_{T^\theta}(n)/\log{T^\theta}.$ We recall that $[m,n]$ denotes the least common multiple of $m$ and $n$, $\phi$ is Euler's totient function, $\omega(m)$ is the number of distinct prime divisors of $m$, and $\sigma(m)=\sum_{d\mid m} d$ is the sum of all positive divisors of $m$. 

\begin{lemma}
\label{lem:selberg}
Let $q\in\mathbb{N}$ be square-free. There exists an absolute constant $C_1$ such that, uniformly in $X,Y\geq1$,
\begin{equation}
\label{eq:mixed-selberg}
    \sum_{\substack{m,n\geq1\\ (mn,q)=1}} \frac{\rho_X(m)\rho_Y(n)}{[m,n]}=\frac{Q}{\phi(Q)}\log{\min{(X,Y)}}+O\left(C_1^{\omega(Q)}\right).
\end{equation}
Moreover, given $N\geq2$, suppose that $N^{9/10}\leq X,Y\leq N$ and $|C|\ll \log{N}$, with a fixed implicit constant. Then, for a constant
$C_1>1,$
\begin{align}
\label{eq:weighted-selberg}
 &\sum_{\substack{m,n\geq1\\(mn,q)=1}}
 \frac{\rho_X(m)\rho_Y(n)}{[m,n]}
 \left(
 C+\log(m,n)-\frac12\log m-\frac12\log n
 \right) \notag\\
 &\qquad =
 \frac{Q}{\phi(Q)}\left(
 C\log\min(X,Y)+\frac12\log X\log Y
 \right)
 +O\left(C_1^{\omega(Q)}\log{N}\right).
\end{align}
\end{lemma}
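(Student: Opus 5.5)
I would prove both parts of Lemma~\ref{lem:selberg} by Mellin inversion, in the classical style used for Selberg and Barban--Vehov sums (cf.\ Graham's evaluation of $\sum\rho_X(m)\rho_Y(n)/[m,n]$). I read $Q$ in the statement as the squarefree integer $q$.

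\textbf{Step 1: reduction to an Euler product.} For $c>0$ we have the representation
\[
\rho_X(m)=\mu(m)\,\frac{1}{2\pi i}\int_{(c)}\Big(\frac{X}{m}\Big)^{s}\frac{ds}{s^{2}}.
\]
Inserting this for both weights and interchanging sums and integrals, the left side of \eqref{eq:mixed-selberg} becomes
\[
\frac{1}{(2\pi i)^{2}}\int_{(c)}\int_{(c)}X^{s}Y^{w}\,\mathcal F_q(s,w)\,\frac{ds\,dw}{s^{2}w^{2}},
\qquad
\mathcal F_q(s,w)=\prod_{p\nmid q}\Big(1-\frac{1}{p^{1+s}}-\frac{1}{p^{1+w}}+\frac{1}{p^{1+s+w}}\Big).
\]
The Euler product factors as
\[
\mathcal F_q(s,w)=\frac{\zeta(1+s+w)}{\zeta(1+s)\zeta(1+w)}\,\mathcal A(s,w)\,\mathcal E_q(s,w).
\]
Here $\mathcal A$ is holomorphic and bounded in $\Re s,\Re w>-1/4$ with $\mathcal A(0,0)=1$. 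The factor $\mathcal E_q$ is the finite product over $p\mid q$ of the reciprocals of the local factors, and $\mathcal E_q(0,0)=\prod_{p\mid q}(1-1/p)^{-1}=q/\phi(q)$. By Cauchy estimates on small polydiscs, every partial derivative of $\mathcal E_q$ of bounded order is $\ll C^{\omega(q)}$ uniformly near $(0,0)$.

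\textbf{Step 2: residues.} I would first shift the $w$-contour to the left of $0$ into a standard zero-free region for $\zeta(1+w)$. The factor $1/\zeta(1+w)$ vanishes at $w=0$, which kills one power of $w^{-2}$, and the pole of $\zeta(1+s+w)$ at $w=-s$ is picked up. Next I would do the same in $s$, after splitting according to whether $X\le Y$ or $X>Y$ so that the shift goes in the favourable direction. The remaining poles are at the origin and have bounded total order. Their residues are polynomials in $\log X,\log Y$ whose coefficients are derivatives of $\mathcal A\mathcal E_q$ at $(0,0)$. The standard computation (the one giving $\log\min(X,Y)$ when $q=1$) leaves the leading term $\mathcal E_q(0,0)\log\min(X,Y)$. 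The lower-order coefficients are $O(C^{\omega(q)})$, and the shifted integrals are $O(1)$ times the same factor, uniformly in $X,Y\ge1$. This proves \eqref{eq:mixed-selberg}.

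\textbf{Step 3: the weighted sum.} Using $[m,n]=mn/(m,n)$, I would introduce an extra variable $u$ and consider
\[
\mathcal F_q(s,w;u)=\sum_{(mn,q)=1}\frac{\mu(m)\mu(n)(m,n)^{u}}{[m,n]\,m^{s}n^{w}}.
\]
Then $\log(m,n)$ corresponds to $\partial_u$ at $u=0$, and $-\tfrac12\log m-\tfrac12\log n$ corresponds to $-\tfrac12(\partial_s+\partial_w)$ acting on the integrand before the $X^sY^w$ factor. The Euler product now contains $\zeta(1+s+w-u)$ in place of $\zeta(1+s+w)$, so the same contour shifts apply. The constant $C$ simply reproduces $C$ times \eqref{eq:mixed-selberg}, which gives $C\,\frac{q}{\phi(q)}\log\min(X,Y)+O(C^{\omega(q)}\log N)$ since $|C|\ll\log N$. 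For the derivative terms, I would compute the residue at the origin of the differentiated integrand. The condition $N^{9/10}\le X,Y\le N$ makes all logarithms comparable to $\log N$. Every term except $\frac{q}{\phi(q)}\cdot\frac12\log X\log Y$ then carries at most one power of $\log N$ together with a bounded-order derivative of $\mathcal E_q$, which gives the error $O(C_1^{\omega(q)}\log N)$.

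\textbf{Main obstacle.} The main obstacle is this residue bookkeeping in Step 3. The quadratic terms $\log^2X$, $\log X\log Y$ and $\log^2\min(X,Y)$ must combine to exactly $\frac12\log X\log Y$ with no stray $\log^2$ term of size $\log^2 N$. I would check this against the case $q=1$ and $X=Y$, where it can also be verified by an elementary computation. I would also keep track of the fact that derivatives of $\mathcal E_q$ only ever produce $C^{\omega(q)}$ factors, and never factors like $\log q$ that are not bounded in this form; this is what makes the bound uniform in $q$.
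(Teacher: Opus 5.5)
Your outline is sound, but it takes a genuinely different route from the paper's.

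\textbf{First identity.} For \eqref{eq:mixed-selberg} the paper uses no complex analysis. It quotes Murty's asymptotic for $S(M)=\sum_{k\le M,\ k\equiv 1 \pmod{Q}}\sum_{m,n\mid k}\rho_X(m)\rho_Y(n)$. By the Chinese remainder theorem, the number of such $k$ divisible by $[m,n]$ is $M/(Q[m,n])+O(1)$; the paper then multiplies by $Q/M$ and lets $M\to\infty$. The $Q$-dependence comes out as $Q\sigma(Q)/\phi(Q)^2\le 6^{\omega(Q)}$.

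Your Mellin argument is self-contained and makes the role of $q$ transparent, since everything sits in $\mathcal E_q$. In fact $\mathcal A(s,0)=1$ and $\mathcal E_q(s,0)=q/\phi(q)$, so the $w=0$ residue contributes exactly $\frac{q}{\phi(q)}\log X$. The cost is the zero-free region, plus care with the pole at $w=-s$. The shifted $w$-contour crosses that pole only for $|\mathrm{Im}\, s|$ below a height depending on the starting abscissa. The resulting one-variable integral is then handled in one of two ways:
\begin{itemize}
\item when $X<Y$, it is bounded by deforming to the right, but only inside the zero-free region of $\zeta(1-s)$;
\item when $X\ge Y$, you move left and pick up $-\frac{q}{\phi(q)}\log(X/Y)+O(C^{\omega(q)})$.
\end{itemize}

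\textbf{Weighted identity.} For \eqref{eq:weighted-selberg} the two routes differ more. The paper never evaluates a higher-order residue. It uses $\rho_X(m)\log m=\log X\,\rho_X(m)-2\int_1^X\rho_u(m)\,du/u$. Since the weights are supported on square-free integers, it also uses $\log(m,n)=\sum_{p\mid(m,n)}\log p$. This turns the $\log(m,n)$-sum into $\sum_{p\nmid Q}\frac{\log p}{p}\sum_{(m'n',Qp)=1}\rho_{X/p}(m')\rho_{Y/p}(n')/[m',n']$. Everything is thus reduced to \eqref{eq:mixed-selberg} with varying parameters, which is why the first part is stated uniformly in $X,Y\ge1$ and in $Q$.

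With $x=\log X\le y=\log Y$, the $\log(m,n)$-, $\log m$- and $\log n$-weighted sums are, respectively,
\begin{itemize}
\item $\frac{Q}{\phi(Q)}\frac{x^2}{2}$, by Mertens;
\item $O(C_1^{\omega(Q)}\log N)$;
\item $\frac{Q}{\phi(Q)}(x^2-xy)$;
\end{itemize}
each up to $O(C_1^{\omega(Q)}\log N)$. So the quadratic cancellation you single out as the main obstacle is a one-line computation there.

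\textbf{If you keep your route.} First, $\log m\cdot m^{-s}=-\partial_s m^{-s}$. So the weight $-\tfrac12\log m-\tfrac12\log n$ corresponds to $+\tfrac12(\partial_s+\partial_w)$ applied to $\mathcal F_q$ alone, not $-\tfrac12(\partial_s+\partial_w)$, and not acting on the factor $s^{-2}w^{-2}$.

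Second, your proposed check at $q=1$, $X=Y$ cannot detect an error of this kind. The $\log m$- and $\log n$-weighted sums together contribute $\frac{q}{\phi(q)}(\min(x,y)^2-xy)$ to leading order, and this vanishes when $X=Y$. Only the full residue computation, or the paper's reduction, pins down the term $\frac12\log X\log Y$.
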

\begin{proof}
Indeed, \cite[Proposition~2.4]{Mu18} adapted to our context gives, for every $M>XYQ$,
\begin{equation}
\label{eq:Murty_lemma}
    S(M):=\sum_{\substack{k\leq M\\ k\equiv 1 \mod Q}} \sum_{m,n\mid k} \rho_X(m)\rho_Y(n)=\frac{M}{\phi(Q)} \log \min(X,Y)+O(XY)+O\left(\frac{M\sigma(Q)}{\phi(Q)^2}\right).
\end{equation}
The left-hand side above can be written as
\begin{align*}
    S(M)&=\sum_{\substack{m,n\geq1\\ (mn,Q)=1}} \rho_X(m)\rho_Y(n)\cd D, \text{ where }\\
    D:&=\#\left\{1\leq k\leq M; k\equiv 1 \mod Q \text{ and } [m,n]\mid k\right\}.
\end{align*}
Since $([m,n],Q)=1$, the Chinese remainder theorem gives $D=M/(Q[m,n])+O(1)$. Since $\sum_{m\leq X} |\rho_X(m)|\leq X\log X$, we  multiply both sides of \eqref{eq:Murty_lemma} by $Q/M$, and then let $M\ra\infty$ to arrive at 
\begin{align*}
    \sum_{\substack{m,n\geq1\\ (mn,Q)=1}} \frac{\rho_X(m)\rho_Y(n)}{[m,n]}=\frac{Q}{\phi(Q)}\log{\min{(X,Y)}}+O\left(\frac{Q\sigma(Q)}{\phi(Q)^2}\right).
\end{align*}
Because $Q$ is square-free, then $Q\sigma(Q)\phi(Q)^{-2}=\prod_{p\mid Q} p(p+1)(p-1)^{-2}\leq 6^{\omega(Q)}$, thus establishing \eqref{eq:mixed-selberg}.

For the weighted sum, we suppose by symmetry that \(X\leq Y\), and write \(x=\log X\), \(y=\log Y\), and \(E_Q=C_0^{\omega(Q)}\).
Partial summation
\[
 \rho_X(m)\log m
 =x\rho_X(m)-2\int_1^X\rho_u(m)\,\frac{du}{u}
\]
together with \eqref{eq:mixed-selberg} give
\begin{align*}
    \sum_{\substack{m,n\geq1\\ (mn,Q)=1}} \frac{\rho_X(m)\log(m)\rho_Y(n)}{[m,n]}&=O(C_1^{\omega(Q)}\log N), \text{ and}\\
    \sum_{\substack{m,n\geq1\\ (mn,Q)=1}} \frac{\rho_X(m)\rho_Y(n)\log(n)} {[m,n]}&=\frac{Q}{\phi(Q)}(x^2-xy)+O(C_1^{\omega(Q)}\log N).
\end{align*}
Moreover, since $\rho_X,\rho_Y$ are supported on square-free integers,
\begin{align*}
 \sum_{\substack{m,n\geq1\\(mn,Q)=1}}
 \frac{\rho_X(m)\rho_Y(n)\log(m,n)}{[m,n]}
 &=\sum_{\substack{p\leq X\\p\nmid Q}}\frac{\log p}{p}\,\sum_{\substack{m,n\geq1\\ (mn,Qp)=1}} \frac{\rho_{X/p}(m)\rho_{Y/p}(n)}{[m,n]}\\
 &=\frac{Q}{\phi(Q)}\sum_{\substack{p\leq X\\p\nmid Q}}
 \frac{\log p}{p-1}\log\frac{X}{p}+O(C_1^{\omega(Q)}\log N)\\
 &=\frac{c_Qx^2}{2}+O(C_1^{\omega(Q)}\log N).
\end{align*}
 Combining these estimates with \eqref{eq:mixed-selberg}, the main term is
\[
 \frac{Q}{\phi(Q)}\left[Cx+\frac{x^2}{2}-\frac{x^2-xy}{2}\right]
 =\frac{Q}{\phi(Q)}\left(Cx+\frac{xy}{2}\right).
\]
Recalling that \(|C|\ll\log N\), the total error is
\(O(C_1^{\omega(Q)}\log N)\) (after enlarging \(C_1\) if needed).  This proves \eqref{eq:weighted-selberg}.
\end{proof}

We are now ready to prove Proposition \ref{prop:bounding_F}.
\begin{proof}[Proof of Proposition \ref{prop:bounding_F}]
Without loss of generality, assume that $0<\varepsilon<1$. For fixed $h,k$, let $g:=(hr,ks)$. Since $(r,s)=1$, we note that
\begin{align*}
    \operatorname{lcm}\!\left(\frac{k}{(k,r)},\frac{h}{(h,s)}\right)
 =\frac{hk}{g}.
\end{align*}
Consequently, all solutions of $mk=rd$ and $nh=sd$ are given by the parametrization 
\begin{align*}
    d=q\frac{hk}{g},\qquad
 m=q\frac{hr}{g},\qquad
 n=q\frac{ks}{g},\qquad q\geq1.
\end{align*}
Since $b(n)=\rho_{T^\theta}(n)/\log{T^\theta}$, we can write
\begin{equation}\label{eq:F-H}
 F(r,s,t)=\frac1{(\log{T^\theta})^2}\sum_{h,k\leq T^\theta}
 \rho_{T^\theta}(h)\rho_{T^\theta}(k)\frac{g}{hk}
 H\!\left(\frac{tg^2}{2\pi hkrs}\right).
\end{equation}
We choose $0<\eta<1$, depending only on $\theta$, as specified below. We split the argument into two cases.
\subsection*{The case $rs>T^\eta$} Since $|b(n)|\leq1$, the rapid decay of $W$ gives, for every $\delta>0$ and all sufficiently large $A$,
\begin{align*}
 |F(r,s,t)|
 &\ll_A
 \sum_{d\geq1}\frac{\tau(rd)\tau(sd)}d
 \left(1+\frac{rsd^2}{T^{1+2\theta}}\right)^{-A}\\
 &\ll_\delta
 (rs)^\delta
 \sum_{d\geq1}d^{-1+2\delta}
 \left(1+\frac{rsd^2}{T^{1+2\theta}}\right)^{-A}\\
 &\ll_\delta
 (rs)^\delta
 \left\{1+\left(\frac{T^{1+2\theta}}{rs}\right)^\delta\right\}\ll_{\ep,\theta,G} (rs)^{\ep/2},
\end{align*}
provided that we choose $\delta$ so that $\delta\frac{1+2\theta}{\eta}<\frac{\varepsilon}{2}$. Since $rs>T^\eta$, it follows that $\log T\ll_{\varepsilon,\eta}(rs)^{\varepsilon/2}$, and hence $|F(r,s,t)|\ll_{\ep,\theta,G} (rs)^\ep/\log{T}$, as desired.

\subsection*{The case $rs\leq T^\eta$} We let $N=T^\theta$. Since $\frac{tg^2}{2\pi hkrs}\gg T^{1-2\theta-\eta}$, Lemma \ref{lem:H} gives
\begin{align}
\label{eq:F_after_H_lemma}
    (\log{T^\theta})^2F(r,s,t)=\sum_{h,k\leq N}
 \rho_{T^\theta}(h)\rho_{T^\theta}(k)\frac{g}{hk}\left[\frac{1}{2}\log\left(\frac{tg^2}{2\pi hkrs}\right)+\ga_0\right]+O_\theta(T^{-100}).
\end{align}
Indeed, $g\leq\sqrt{rshk}$ and therefore
\begin{align*}
    \sum_{h,k\leq N}\frac{g}{hk}
 \ll\sqrt{rs}
 \left(\sum_{n\leq N}n^{-1/2}\right)^2
 \ll\sqrt{rs}\, T^\theta.
\end{align*}
Provided that $\eta<1-2\theta$, choosing $A$ large enough that $(1-2\theta-\eta)A>101+\theta+\eta/2$ gives the error $O_\theta(T^{-100})$.

For an integer $m$, consider its radical $\operatorname{rad}(m):=\prod_{p\mid m} p$. Let $R:=\operatorname{rad}(r)$, $S:=\operatorname{rad}(s)$, and $Q:=RS=\operatorname{rad}(rs)$. Since we may assume that $h$ and $k$ are square-free, we can write them uniquely as $h=ah_0$ and $k=bk_0$, where $a,b\mid Q$ and $(h_0k_0,Q)=1$. Set $a_R:=(a,R)$, $a_S:=(a,S)$, $b_R:=(b,R)$, and $b_S:=(b,S)$. We note that
\begin{equation}\label{eq:gcd-split}
 (hr,ks)=(h_0,k_0)a_Sb_R, \text{ and hence }
 \frac{g}{hk}=\frac{(hr,ks)}{hk}
 =\frac1{[h_0,k_0]a_Rb_S}.
\end{equation}
Since $\rho_N(h)=\mu(a)\rho_{N/a}(h_0)$ and $\rho_N(k)=\mu(b)\rho_{N/b}(k_0)$, equation \eqref{eq:gcd-split} allows us to rewrite \eqref{eq:F_after_H_lemma} as
\begin{align}
\label{eq:local-form}
 (\log N)^2F(r,s,t)
 &=
 \sum_{a,b\mid Q}\frac{\mu(a)\mu(b)}{a_Rb_S}
 \sum_{\substack{m,n\geq1\\(mn,Q)=1}}
 \frac{\rho_{N/a}(m)\rho_{N/b}(n)}{[m,n]}
 \notag\\
 &\quad\times
 \left(
 C_{a,b}+\log(m,n)-\frac12\log m-\frac12\log n
 \right)
 +O(T^{-100}),
\end{align}
where $C_{a,b}:=\frac12\log\frac{t}{2\pi rs}+\gamma_0+\frac12\log\frac{a_Sb_R}{a_Rb_S}$. Observe that $|C_{a,b}|\ll \log N$ and moreover, if we choose $\eta<\theta/10$, then $Q\leq rs\leq T^{\eta}<N^{1/10}$, and hence $N^{9/10}\leq N/a,N/b\leq N$.

We are now is the position to apply Lemma \ref{lem:selberg}. Indeed, we apply Lemma \ref{lem:selberg} to every pair $a,b$ to arrive at
\begin{equation}\label{eq:F-main}
 (\log N)^2F(r,s,t)=c_Q\mathcal M+O\left(C_2^{\omega(Q)}\log N\right)+O(T^{-100}),
\end{equation}
where the factor $\tau(Q)^2=4^{\omega(Q)}$ coming from the number of pairs $(a,b)$ has been absorbed by enlarging $C_2$, and
\begin{equation}\label{eq:M-def}
 \mathcal M:=
 \sum_{a,b\mid Q}\frac{\mu(a)\mu(b)}{a_R b_S} \left[
 C_{a,b}\min\left(\log \frac{N}{a},\log \frac{N}{b}\right)
 +\frac{1}{2}\log \frac{N}{a}\log \frac{N}{b}
 \right].
\end{equation}

It remains to exhibit the cancellation in $\mathcal M$. To this end, we use pointwise bound
\begin{align*}
    \sum_{a,b\mid Q}\frac{\mu(a)\mu(b)}{a_R b_S} \left[\ga_0+\frac{1}{2}\log\left(\frac{a_S b_R}{a_R b_S}\right)\right]\min\left(\log \frac{N}{a},\log \frac{N}{b}\right)\ll \tau(Q)^2\log Q\log N,
\end{align*}
owing to the estimate
\[
\left|\log\left(\frac{a_S b_R}{a_R b_S}\right)\right|\leq 4\log Q
\]
and $0\leq \log(N/a),\log(N/b)\leq\log N$.

For the remaining term in $C_{a,b}$, we need a somewhat finer estimate because $|\log(t/(2\pi rs))|\gg\log T^{1-\eta}$. For $0\leq u\leq \log N$, define
\begin{align*}
   A(u):=\sum_{\substack{a\mid Q\\u\leq \log \frac{N}{a}}} \frac{\mu(a)}{a_R}\, \text{ and } \,
   B(u):=\sum_{\substack{b\mid Q\\u\leq \log \frac{N}{b}}} \frac{\mu(b)}{b_S}.
\end{align*}
We also consider the corresponding sums over all divisors of $Q$. Expanding into Euler products, we have
\begin{align*}
    A_*&=\sum_{a|Q}\frac{\mu(a)}{a_R}= \prod_{p\mid R}\left(1-\frac1p\right)
 \prod_{p\mid S}(1-1),\\
    B_*&=\sum_{b|Q}\frac{\mu(b)}{b_S}=
 \prod_{p\mid R}(1-1)
 \prod_{p\mid S}\left(1-\frac1p\right).
\end{align*}
Since $Q>1$, at least one of $R,S$ is nontrivial, and therefore $A_*B_*=0$. The elementary identity
\begin{align*}
    \min\left(\log \frac{N}{a},\log \frac{N}{b}\right)
 =\int_{0}^{\log N}\1_{u\leq \log \frac{N}{a}}\1_{u\leq \log \frac{N}{b}}\,\d u
\end{align*}
implies
\begin{equation}
\label{eq:min-integral}
 \sum_{a,b\mid Q} \frac{\mu(a)\mu(b)}{a_R b_S} \min\left(\log \frac{N}{a},\log \frac{N}{b}\right)
 =
 \int_{0}^{\log{N}} A(u) B(u)\,\d u.
\end{equation}
If $u\leq \log (N/Q)$, every divisor of $Q$ occurs in both partial sums; the integrand is then $A_*B_*=0$. Since $|A(u)|,|B(u)|\leq\tau(Q)$ and $0\leq \log(N/a),\log(N/b)\leq\log N$, we obtain
\begin{align*}
 \left|\sum_{a,b\mid Q}\frac{\mu(a)\mu(b)}{a_R b_S} \min\left(\log \frac{N}{a},\log \frac{N}{b}\right) \right|\leq\int_{\log (N/Q)}^{\log N}|A(u)B(u)|\,\d u \leq\tau(Q)^2\log Q.
\end{align*}
Combining this with $|\log(t/(2\pi rs))|\ll \log N$ yields an error term $O(\tau(Q)^2\log Q\log N)$.

For the last term in \eqref{eq:M-def}, we factor the sum:
\begin{align*}
 \sum_{a,b\mid Q} \frac{\mu(a)\mu(b)}{a_R b_S}\log \frac{N}{a} \log \frac{N}{b}=\left(
A_*\log N-\sum_{a\mid Q}\frac{\mu(a)}{a_R}\log a
 \right)
 \left(
 B_*\log N-\sum_{b\mid Q}\frac{\mu(b)}{b_S}\log b
 \right).
\end{align*}
Upon expanding this product, its $(\log N)^2$ term vanishes because $A_*B_*=0$. Since
\begin{align*}
    \sum_{a\mid Q}\frac{|\mu(a)|}{a_R}\log a,\ 
 \sum_{b\mid Q}\frac{|\mu(b)|}{b_S}\log b
 \leq\tau(Q)\log Q
\end{align*}
and $\log Q<\log N/10$, the remaining terms contribute $O\left(\tau(Q)^2\log N\log Q\right)$.

Therefore, $\mathcal M\ll\tau(Q)^2\log N\log Q$ and after replacing $\ep>0$ by a smaller number if necessary, we obtain
\begin{align*}
    (\log N)^2|F(r,s,t)|\ll_{\varepsilon,\theta,G}Q^\varepsilon \log N+\frac{1}{T^{100}}.
\end{align*}
Since $Q\leq rs$ and $\theta>1/10$, we conclude that
\begin{align*}
    |F(r,s,t)|
 \ll_{\varepsilon,\theta,G}
 \frac{(rs)^\varepsilon}{\log T}.
\end{align*}
and the result follows.
\end{proof}

\section{Proof of Theorem \ref{thm:main2}}

The proof of Theorem \ref{thm:main2} is inspired by the resonance method of~\cite[Section 5]{LR}, however our arguments are substantially different due to extra complications. We note that direct substitution of the Kurasov--Sarnak formula for Poisson summation is insufficient since the positive Kurasov--Sarnak spectrum is too large (more precisely, it has $O(N^M)$ frequencies up to $N$, rather than $O(N)$). We therefore treat the low and high frequencies using different approaches.

Throughout this section, we fix a real-rooted trigonometric polynomial $P$, its associated vector $\bo\ell$ from Theorem \ref{thm:KS}, and a nonzero function $\p\in C_c^\infty((1,2))$ such that $0\leq \p\leq 1$. Since several of the following computations already appeared in the estimates for the first and second moments, we focus on the new features that arise.

\subsection*{Step 1: Construction of the resonator}
We choose $0<\eta<1/120$ and set $\delta=1/3+20\eta$,
\begin{equation}\label{eq:parameters}
 X=T^{1/2-\delta},\qquad H=T^{1/2-\eta},\qquad
 \mathcal L=\sqrt{\log X\log\log X},\qquad
 R=(\log\mathcal L)^3,
\end{equation}
where \(X\) is the length of the resonator.

We first describe a small exceptional set of primes. Let
\[
 \Xi:=\{\pm\bo\ell\cd\ba:\ba\in\z_+^M\}
\]
be the set of signed nonzero crystalline frequencies. For each $\xi\in\Xi$ with $0<|\xi|\leq R$, consider coprime positive integers $a,b$ satisfying
\begin{equation}\label{eq:low-pair}
 a\leq X,
 \qquad b\leq H,
 \qquad
 \left|\xi-\frac1{2\pi}\log\frac ab\right|
 \leq \frac{1}{T^{1-\eta}}.
\end{equation}
There is at most one such pair since distinct fractions in these ranges have logarithms separated by \(\gg(XH)^{-1}=T^{-1+\delta+\eta}\). For every value of $a$ or $b$ that is not $1$, choose one of its prime divisors, and denote the resulting set by \(\mathcal S\). Thus $\#\mathcal S\ll R^M$.

If a pair in \eqref{eq:low-pair} has $a=1$, we include the prime chosen from $b$ in a square-free integer $Q$. Since in that case $\log b\leq 2\pi R+o(1)$, we have the esimates
\begin{equation}\label{eq:Q-size}
 \omega(Q)\ll R^M,
 \qquad
 \log Q\ll R^{M+1}=o(\log T).
\end{equation}
If there are no such pairs, we simply take $Q=1$.

Define a nonnegative multiplicative function $r$, with $r(1)=1$ and supported on square-free integers, by
\begin{equation}\label{eq:r-def}
 r(p)=
 \begin{cases}
 \displaystyle\frac{\mathcal L}{\sqrt p\log p},
 &\mathcal L^2\leq p\leq
       \exp((\log\mathcal L)^2),\quad p\notin\mathcal S,\\[6pt]
 0,&\text{otherwise},
 \end{cases}
\end{equation}
and $r(p^j)=0$ for $j\geq2$.

The resonator takes the form
\begin{equation}\label{eq:B-def}
 B(s)=\sum_{n\leq X}\frac{\sqrt n\,r(n)}{n^s},
 \qquad
 B\!\left(\tfrac12+it\right)=\sum_{n\leq X}r(n)n^{-it}.
\end{equation}
Finally, we put
\begin{equation}\label{eq:AQ-def}
 A_Q(s)=\sum_{\substack{h\leq TQ\\(h,Q)=1}}h^{-s},
 \qquad
 P_Q(s)=\prod_{p\mid Q}(1-p^{-s}).
\end{equation}

We note in passing that instead of the usual truncation of $\zeta(s),$ we will use the coprime Dirichlet polynomial $A_Q(s)$ of length $TQ$. The reason is that if a low spectral relation has reduced numerator $a>1$, the prime removed from $a$ makes the corresponding resonator coefficient zero. If $a=1$, a chosen prime divisor of $b$ could occur in the zeta variable $h$ rather than in the resonator variable. The condition $(h,Q)=1$ removes precisely this remaining possibility.

\subsection*{Step 2: The sampling lemma}

Let
\begin{equation}\label{eq:E-S-def}
 E_X=\sum_{n\leq X}r(n)^2 \qquad \text{ and }
 \qquad
 S_X=\sum_{nh\leq X}\frac{r(nh)r(n)}{\sqrt h}.
\end{equation}

\begin{lemma}\label{lem:sampling}
With the preceding choices, as $T\to\infty$ with $\eta$ fixed,
\begin{align}
 \sum_{P(\gamma)=0}
  \left|B\!\left(\tfrac12+i\gamma\right)\right|^2
  \p\!\left(\frac\gamma T\right)
 &=c_0T\ft{\p}(0)E_X(1+o(1)),
 \label{eq:sample-den}\\
 \sum_{P(\gamma)=0}
  A_Q\!\left(\tfrac12+i\gamma\right)
  \left|B\!\left(\tfrac12+i\gamma\right)\right|^2
  \p\!\left(\frac\gamma T\right)
 &=c_0T\ft{\p}(0)S_X(1+o(1)).
 \label{eq:sample-num}
\end{align}
The constants implicit in $o(1)$ may depend on $P$, $\p$, and $\eta$.
\end{lemma}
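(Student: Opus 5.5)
Expand both sides into exponential sums and apply Theorem~\ref{thm:KS} to each resulting frequency. For \eqref{eq:sample-den} write $|B(\tfrac12+i\gamma)|^2=\sum_{m,n\le X}r(m)r(n)(n/m)^{i\gamma}$. Applying the summation formula to $\psi(x)=(n/m)^{ix}\p(x/T)$ gives
\[
\sum_{m,n\le X}r(m)r(n)\Big\{c_0T\ft{\p}\big(\tfrac{T}{2\pi}\log\tfrac mn\big)+\sum_{\xi\in\Xi}c_\xi T\ft{\p}\big(T(\xi+\tfrac1{2\pi}\log\tfrac mn)\big)\Big\}.
\]
In the $\xi=0$ part, the diagonal $m=n$ yields $c_0T\ft{\p}(0)E_X$. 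For $m\ne n$ we have $|\log(m/n)|\gg X^{-1}\ge T^{-1/2}$, so rapid decay of $\ft{\p}$ makes these terms negligible once we use the trivial bound $(\sum_n r(n))^2\le X E_X$. For the crystalline part, split $|\xi|\le R$ from $|\xi|>R$. When $|\xi|>R$, note $|\log(m/n)|\le\log X$; if $2\pi|\xi|>2\log X$ the argument of $\ft{\p}$ has size $\gg T|\xi|$, and the polynomial bound $\sum_{|\xi|\le A}|c_\xi|\ll A^{M+1}$ makes the sum $O(T^{-10})XE_X$. The case $R<|\xi|\le \log X$ is impossible because the resonator is supported on primes up to $\exp((\log\mathcal L)^2)$ and $n\le X$. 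So we only need $\log(m/n)$ close to $2\pi\xi$, and I would bound this range by the same good/bad splitting as in Section~\ref{sec:I}. For $|\xi|\le R$ the fraction $m/n$ reduced to $a/b$ has $a,b\le X\le H$, so $|\xi-\frac1{2\pi}\log\frac ab|\le T^{-1+\eta}$ is exactly \eqref{eq:low-pair}. Such a pair is unique, and a prime of $a$ or $b$ lies in $\mathcal S$. Since $r$ is supported on squarefree integers with no prime from $\mathcal S$, and $m=ad, n=bd$, we get $r(m)r(n)=0$ unless $a=b=1$, which means $\xi$ essentially $0$, a contradiction. Outside the bad set, rapid decay applies as before. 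The total error is $o(TE_X)$.

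For \eqref{eq:sample-num} expand
\[
A_Q|B|^2=\sum_{(h,Q)=1,\,h\le TQ}h^{-1/2}\sum_{m,n}r(m)r(n)\Big(\tfrac{n}{mh}\Big)^{i\gamma}
\]
and apply Theorem~\ref{thm:KS}. Three contributions arise.

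\emph{The $\xi=0$ term.} This reproduces the continuous argument of Li--Radziwi\l\l. The diagonal $n=mh$ gives $c_0T\ft{\p}(0)\sum r(mh)r(m)h^{-1/2}$ with $(h,Q)=1$. Since $r$ vanishes on primes in $\mathcal S\supset$ primes of $Q$, this equals $S_X$. Off-diagonal terms satisfy $|\log(n/(mh))|\gg 1/(XTQ)$, which is too small for the trivial argument. Here I would follow the standard treatment: for $h>X^{2}$ one has $|\log|\gg$ small but not $T^{-1}$, so I would use the size bound $\sum_h h^{-1/2}\cdot$ and positivity of $r$ as in \cite[Section 5]{LR}. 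Concretely, the terms with $n/(mh)$ near $1$ (within $T^{-1+\eta}$) force $h$ near $n/m$, giving $O(1)$ values of $h$ per $(m,n)$. This yields $\ll T^{\eta}\sum_{m,n}r(m)r(n)(m/n)^{1/2}\ll T^\eta X E_X\cdot(\text{ratio})$. The exponent choice $\delta=1/3+20\eta$ makes this $o(TS_X)$, using $S_X\ge E_X$.

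\emph{Low crystalline frequencies $|\xi|\le R$.} A bad term needs $\frac{1}{2\pi}\log\frac{mh}{n}$ within $T^{-1+\eta}$ of $\xi$. I would reduce $mh/n$ to $a/b$. I expect this is the main obstacle, since now $a\le XTQ$ and the range of \eqref{eq:low-pair} is not directly met. The plan is to argue as follows. By Farey spacing, if $b\le H$ and $a$ is small the pair is the unique one of \eqref{eq:low-pair}, and coprimality ($a>1$ kills via $\mathcal S$, $a=1$ via $(h,Q)=1$) kills it. Otherwise $bn$-type denominators are large, $>H$. Then the count of $(m,n,h)$ with $mh/n$ in an interval of relative length $T^{-1+\eta}$ is at most $1+XT^{\eta}\cdot n/T$ values of $h$ per $(m,n)$. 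This bounds the contribution by $T\cdot T^{\eta}X^{3/2}H^{-1/2}E_X$-type quantities, which are $o(TS_X)$ by the parameter choices \eqref{eq:parameters}.

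\emph{High frequencies.} These are handled as in the denominator using $\sum_{|\xi|\le A}|c_\xi|\ll A^{M+1}$ and $\log(TQX)\ll\log T$. Combining the three contributions gives \eqref{eq:sample-num}.
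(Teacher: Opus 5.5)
\textbf{Gap: the crystalline frequencies with $|\xi|>R$.} You declare the range $R<|\xi|\le\log X$ in \eqref{eq:sample-den} impossible because $\supp(r)$ only involves primes up to $\exp((\log\mathcal L)^2)$. That does not follow. Only the individual primes satisfy $\log p\le(\log\mathcal L)^2<R$. A product of $\asymp\log\mathcal L$ of them already has logarithm $\asymp(\log\mathcal L)^3$, and $\log X=\mathcal L^2/\log\log X$ is vastly larger than $R=(\log\mathcal L)^3$. So $\frac1{2\pi}\log(n/m)$ with $m,n\in\supp(r)$ takes values throughout $(R,\frac1{2\pi}\log X]$, and for an arbitrary vector $\bo\ell$ nothing excludes $|\xi-\frac1{2\pi}\log(n/m)|\le T^{-1+\eta}$.

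In \eqref{eq:sample-num} the corresponding range ($R<|\xi|\ll\log T$ with $mh\le H$, in your notation) is only deferred to ``as in the denominator''. There each $\xi$ has a unique reduced pair $(a,b)$ with $n=ad$, $mh=bd$. Neither $\mathcal S$ nor $(h,Q)=1$ acts on it, since primes are removed only for $|\xi|\le R$. So you must bound $T\sum_{R<|\xi|\ll\log T}|c_\xi|\,U(a_\xi,b_\xi)$, where $U(a,b)=\sum_{n=ad,\ mh=bd}r(n)r(m)h^{-1/2}$. The only input you cite is $\sum_{|\xi|\le A}|c_\xi|\ll A^{M+1}$. Combined with any bound of the type $U\le S_X$ that does not decay in $|\xi|$, this yields $O(T(\log T)^{M+1}S_X)$, not $o(TS_X)$; for the denominator it yields $O(T(\log X)^{M+1}E_X)$. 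The good/bad splitting of Section~\ref{sec:I} does not transfer either. There the bad part was summable because the weight $s^{-1/2+\varepsilon}$ decays exponentially in the frequency, while the resonator weights $r(m)r(n)$ carry no such normalization.

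\textbf{Missing idea: decay in $|\xi|$ from the multiplicative structure of $r$.} With $u=(m,b)$, $m=um_1$, $b=ub_1$, one has $d=m_1k$, $h=b_1k$ and $r(n)r(m)h^{-1/2}=r(a)r(u)r(m_1)^2r(k)(b_1k)^{-1/2}$. Hence $U(a,b)\le r(a)K(b)S_X$ with $K(b)=b^{-1/2}\prod_{p\mid b,\ p\in\supp(r)}(1+r(p)\sqrt p)$; for $h=1$ this is simply $U\le r(a)r(b)E_X$. Take $\kappa=\frac{\log\log\mathcal L}{10\log\mathcal L}$. The local bounds $r(p)\le p^{-\kappa}$ and $p^{-1/2}(1+r(p)\sqrt p)\le p^{-\kappa}$ give $r(a)K(b)\le(ab)^{-\kappa}\ll e^{-2\pi\kappa|\xi|}$, using $ab\ge\frac12e^{2\pi|\xi|}$. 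Since $\kappa R\asymp(\log\mathcal L)^2\log\log\mathcal L\to\infty$, this beats the polynomial growth of the $c_\ba$, giving $\sum_{|\xi|>R}|c_\ba|e^{-2\pi\kappa|\xi|}=o(1)$.

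This is exactly why the threshold $R$ exists. Below it the decay is worthless and the prime-removal device is needed; above it the decay takes over. Meanwhile $R^M$ is still small enough that deleting $\mathcal S$ does not affect \eqref{eq:quotient}.

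\textbf{Two smaller points.} First, the off-diagonal part of the $\xi=0$ term in \eqref{eq:sample-num} is not an obstacle. Since $n\le X$, $n\neq mh$ forces $|\log(n/(mh))|\ge\log(1+1/X)$, so rapid decay of $\ft\p$ disposes of it. The counting bound you sketch there, of order $T^{1+\eta}XE_X$ once the factor $T$ from $T\ft\p$ is included, would not be $o(TS_X)$.

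Second, the low range is easier than you feared. $|\xi|\le R$ forces $mh\le Xe^{2\pi R}(1+o(1))=XT^{o(1)}<H$, so the reduced pair falls directly under \eqref{eq:low-pair}. Your bound $T^{1+\eta}X^{3/2}H^{-1/2}E_X$ for $mh>H$ matches \eqref{eq:large-h}. However, it is needed for all $|\xi|\ll\log T$, not only for the low frequencies.
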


\begin{proof}
We give the details for~\eqref{eq:sample-num}; the same
argument with $h=1$ proves \eqref{eq:sample-den}.

Expanding the Dirichlet polynomials and applying the Kurasov--Sarnak summation formula gives the main term
\begin{equation}\label{eq:continuous-piece}
 c_0\int_{\r}A_Q\!\left(\tfrac12+it\right)
       \left|B\!\left(\tfrac12+it\right)\right|^2
       \p\!\left(\frac tT\right)\,\d t
\end{equation}
 together with nonzero-frequency terms of the form
\begin{equation}\label{eq:offdiag-KS}
 T c_{\ba}
 \sum_{\substack{m,n\leq X\\h\leq TQ\\(h,Q)=1}}
 \frac{r(m)r(n)}{\sqrt h}\,
 \ft{\p}\!\left(
 T\left(\bo\ell\cd\ba-\frac1{2\pi}\log\frac{m}{nh}\right)
 \right),
 \qquad \pm\ba\in\z_+^M.
\end{equation}
Indeed, the diagonal
$m=nh$ equals $c_0T\ft{\p}(0)S_X$ and the condition $(h,Q)=1$ holds automatically. If $m\neq nh$ and $nh\leq2X$, then $|\log(m/(nh))|\gg X^{-1}$; if $nh>2X$, that logarithm is bounded away from zero. The rapid decay of $\ft{\p}$ therefore shows that th off-diagonal contribution from the main term is $o(TS_X)$.

For \eqref{eq:offdiag-KS}, we separate the sum according to the distance to $\ell\cd\ba$ . The rapid decay of $\ft{\p}$, combined with $0\leq r(n)\leq1$ and $Q=T^{o(1)}$, implies the contribution of the part for which
\begin{equation}\label{eq:near-condition}
 \left|\bo\ell\cd\ba-\frac1{2\pi}\log\frac{m}{nh}\right|>T^{-1+\eta}
\end{equation}
is $o(TE_X)$. Thus, it remains to consider the terms satisfying
\[
 \left|\bo\ell\cd\ba-\frac1{2\pi}\log\frac{m}{nh}\right|\leq T^{-1+\eta},
\]
which restricts the frequencies $\bo\ell\cd\ba$ to $|\bo\ell\cd\ba|\ll\log T$.

\subsection*{The terms with $nh\leq H$} 
Reduce $m/(nh)$ to $a/b$, so that $a\leq X$ and $b\leq H$. The spacing argument used after \eqref{eq:low-pair} shows that, for each signed frequency $\xi=\bo\ell\cd\ba\in\Xi$, there is at most one such pair.

We first deal with the case $0<|\xi|\leq R$. This is where we use the device described above. If $a>1$, the selected prime divisor of $a$ belongs to $\mathcal S$. Since $a\mid m$, this gives $r(m)=0$. If $a=1$, let $p\mid b$ be the selected prime. It belongs to both $\mathcal S$ and $Q$. A nonzero summand must satisfy $r(m)\neq0$ and $r(n)\neq0$. Since $nh=bm$ and $p$ was removed from the support of $r$, this implies $p\nmid n$; hence $p\mid h$, contrary to $(h,Q)=1$. Thus, all the low spectral frequencies in the range $nh\leq H$ contribute zero.


For the higher frequencies and for coprime $a,b,$ we let
\[
 U(a,b)=
 \sum_{\substack{m,n\leq X,\ h,d\geq1\\m=ad,\ nh=bd}}
 \frac{r(m)r(n)}{\sqrt h},
\]
and define
\begin{equation}\label{eq:K-def}
 K(b)=b^{-1/2}
 \prod_{\substack{p\mid b\\p\in\supp(r)}}
       (1+r(p)\sqrt p).
\end{equation}
We claim that
\begin{equation}\label{eq:local-correlation}
 U(a,b)\leq r(a)K(b)S_X.
\end{equation}
To see this, put $u=(n,b)$, $n=un_1$, and $b=ub_1$. Since $(n_1,b_1)=1$, the relation $nh=bd$ implies $d=n_1k$ and $h=b_1k$. Whenever the corresponding summand is nonzero, the multiplicativity gives
\[
 \frac{r(m)r(n)}{\sqrt h}
 =\frac{r(a)r(u)r(n_1)^2r(k)}{\sqrt{b_1k}}.
\]
The sum over $n_1,k$ is a subsum of $S_X$, and
\[
 \sum_{\substack{u\mid b\\u\in\supp(r)}}
 \frac{r(u)}{\sqrt{b/u}}
 =K(b),
\]
which proves~\eqref{eq:local-correlation}.

We now put $\kappa:=\frac{\log\log\mathcal L}{10\log\mathcal L}$. We use several direct bounds
\begin{equation}\label{eq:prime-local-bounds}
 r(p)\leq p^{-\kappa},
 \qquad
 p^{-1/2}(1+r(p)\sqrt p)\leq p^{-\kappa}
\end{equation}
when $T$ is sufficiently large. More explicitly, if $p^j\Vert b$ and $r(p)\neq0$, then
\[
 p^{-j/2}(1+r(p)\sqrt p)\leq p^{-j\kappa};
\]
if $r(p)=0$, its local factor in $K(b)$ is simply $p^{-j/2}\leq p^{-j\kappa}$. Primes $p$ with $r(p)=0$ also force $r(a)=0$ when they divide $a$. Consequently,
\begin{equation}\label{eq:ab-decay}
 r(a)K(b)\leq(ab)^{-\kappa}.
\end{equation}
and our distance condition further implies that $ab\geq \frac12 e^{2\pi|\bo\ell\cd\ba|}$.

It follows from~\eqref{eq:local-correlation}--\eqref{eq:ab-decay} and partial summation that
\begin{align}
 \sum_{\substack{\pm\ba\in\z_+^M,\ |\bo\ell\cd\ba|>R\\
                  \text{a near pair }(a_{\bo\ell\cd\ba},b_{\bo\ell\cd\ba})\text{ exists}}}
 |c_\ba|U(a_{\bo\ell\cd\ba},b_{\bo\ell\cd\ba})
 &\ll S_X
 \sum_{\substack{\pm\ba\in\z_+^M,\ |\bo\ell\cd\ba|>R}}
 |c_\ba| e^{-2\pi\kappa|\bo\ell\cd\ba|}
 =o(S_X),                                      \label{eq:high-tail}
\end{align}
because $\kappa R\asymp (\log\mathcal L)^2\log\log\mathcal L\longrightarrow\infty$.

\subsection*{The terms with $nh>H$}
Since $n\leq X$, the condition $nh>H$ implies $h>H/X=T^{\delta-\eta}$. For fixed $m,n$, and $\bo\ell\cd\ba$, the spacing condition forces $h$ to lie in a multiplicative interval of length $O(T^{-1+\eta})$. Since $h\leq TQ=T^{1+o(1)}$, this interval contains at most $T^{\eta+o(1)}$ integers and therefore, using the bounds $(\sum_{n\leq X}r(n))^2\leq XE_X$ and $\delta=1/3+20\eta,$ the total contribution of this range is
\begin{align}
 &\ll
 T\,T^{-\delta/2+3\eta/2+o(1)}
 \left(\sum_{n\leq X}r(n)\right)^2
 \sum_{\substack{\pm\ba\in\z_+^M\\|\bo\ell\cd\ba|\ll\log T}}|c_\ba|
 \notag\\
 &\ll
 T^{3/2-3\delta/2+3\eta/2+o(1)}E_X
 =O\!\left(T^{1-c\eta}E_X\right)              \label{eq:large-h}
\end{align}
which gives acceptable contribution of $o(TS_X)$.
\end{proof}

\subsection*{Collecting the pieces together}
Here we briefly comment on how to finish the proof of our theorem. The calculation using Rankin's trick due to Soundararajan \cite[Theorem~2.1]{Sound}, in the form used by Li--Radziwi\l\l\ \cite[pp.~345--346]{LR}, gives
\begin{equation}\label{eq:quotient}
 \frac{S_X}{E_X}
 \geq\exp\!\left((1+o(1))
       \sqrt{\frac{\log X}{\log\log X}}\right).
\end{equation}
Deleting \(\mathcal S\) does not affect this asymptotic since its total contribution to the logarithm of the Euler product is
\[
 \ll\frac{R^M}{\mathcal L\log\mathcal L}=o(1).
\]
The standard truncation formula for \(\zeta\) \cite[Theorem~4.11]{Titch} gives, uniformly for \(s=1/2+it\), \(T\leq t\leq2T\),
\begin{equation}\label{eq:AQ-zeta}
 A_Q(s)=\zeta(s)P_Q(s)
 -\frac{(TQ)^{1-s}}{s-1}\frac{\varphi(Q)}Q
 +O\!\left(\tau(Q)(TQ)^{-1/2}\right).
\end{equation}
By \eqref{eq:Q-size}, the last two terms are \(T^{-1/2+o(1)}\) and moreover
\begin{equation}\label{eq:PQ}
 \sup_t|P_Q(\tfrac12+it)|
 \leq\prod_{p\mid Q}(1+p^{-1/2})
 \leq\exp(O_P(R^M))
 =\exp\!\left(o\!\left(
       \sqrt{\frac{\log T}{\log\log T}}\right)\right).
\end{equation}

Denote the left-hand sides of \eqref{eq:sample-den} and \eqref{eq:sample-num} by \(\mathcal M_0\) and \(\mathcal M_1\), respectively. From \eqref{eq:sample-den}, \eqref{eq:sample-num}, and \eqref{eq:quotient},
\[
 \frac{|\mathcal M_1|}{\mathcal M_0}
 \geq\exp\!\left((1+o(1))
       \sqrt{\frac{\log X}{\log\log X}}\right).
\]
On the other hand, positivity of \(|B|^2\p\) and
\eqref{eq:AQ-zeta} show that this ratio is at most
\[
 \sup_{\substack{T\leq\gamma\leq2T\\P(\gamma)=0}}
 |\zeta(\tfrac12+i\gamma)|
 \sup_t|P_Q(\tfrac12+it)|+T^{-1/2+o(1)}.
\]
Using \eqref{eq:PQ} and \(X=T^{1/6-20\eta}\), we conclude that
\[
 \log\sup_{\substack{T\leq\gamma\leq2T\\P(\gamma)=0}}
 |\zeta(\tfrac12+i\gamma)|
 \geq\left(\sqrt{\frac16-20\eta}+o(1)\right)
       \sqrt{\frac{\log T}{\log\log T}}.
\]
Letting first \(T\to\infty\) and then \(\eta\downarrow0\) concludes the proof,

\subsection*{Acknowledgments}
B.K. and O.K. greatly enjoyed the research visits to IMPA in October 2024, where most of the content of the present work has been conceived.
F.G. acknowledges support from the following funding agencies: The Office of Naval Research GRANT14201749, The Serrapilheira Institute (Serra-2211-41824), FAPERJ (E-26/210.245/2024) and CNPq (309910/2023-4). B.K. was supported by  Australian Research Council Grants DE220100859 and DP230100534. G.V.  was supported by CNPq (141446/2023-4) and by a Fulbright Doctoral Dissertation Research Award. Part of this work was carried out during his visit to the University of Texas at Austin under the latter award; he gratefully acknowledges the Fulbright Program for its support and UT Austin for its hospitality and institutional support. He also thanks Lior Alon for fruitful discussions and valuable comments that improved the manuscript, and MIT for its hospitality during a separate research visit.


\subsection*{Declaration on the Use of Generative AI}
Generative AI was used to assist with paper editing, calculation verification, and with the proof simplification of Proposition \ref{prop:bounding_F} (this proposition is borrowed from the work \cite{LR}, but the proof there is somewhat compressed; rather than citing it directly, we have therefore decided to include a different argument for completeness) as well as locating relevant references. The authors assume full responsibility for the final content.


\end{document}